\documentclass{article}
\usepackage{arxiv}

\usepackage[normalem]{ulem} %to strike the words

\usepackage[utf8]{inputenc} % allow utf-8 input
\usepackage[T1]{fontenc}    % use 8-bit T1 fonts
\usepackage{hyperref}       % hyperlinks
\usepackage{url}            % simple URL typesetting
\usepackage{booktabs}       % professional-quality tables
\usepackage{amsfonts}  
\usepackage{amssymb}
\usepackage{color}
\usepackage{lipsum}
\usepackage{amsthm}
\usepackage{indentfirst}
\usepackage{graphicx}
\usepackage{epsfig,epsf,latexsym,subfigure}
\usepackage{float}
\usepackage{epstopdf}
\usepackage{fourier}
\usepackage{bm}
\usepackage{bbm} % for Indicator function
\usepackage{mathtools}
\usepackage{latexsym,enumerate}
\usepackage{tabularx}
\usepackage{todonotes}

\usepackage{capt-of}
\usepackage{cancel}

\usepackage{cases}
\usepackage{enumitem} 

\newcommand{\comment}[1]{}

\newcommand{\BEA}{\begin{eqnarray}}
\newcommand{\EEA}{\end{eqnarray}}

\newcommand{\BR}{\mathbb{R}}

\newtheorem{lem}{Lemma}[section]
\newtheorem{thm}{Theorem}[section]

\newtheorem{remark}{Remark}[section]

\title{A Geometric Local Parameterization Method for Generalized Hele–Shaw Free Boundary Problems with Source Terms}

\author{
  Zengyan Zhang and Wenrui Hao \\
  Department of Mathematics \\
  The Pennsylvania State University, University Park, PA 16802, USA\\
  \texttt{zzz5527@psu.edu} and \texttt{wxh64@psu.edu} \\
  \And
 John Harlim \\
  Department of Mathematics, \\ Institute for Computational and Data Sciences \\
  The Pennsylvania State University, University Park, PA 16802, USA\\
  \texttt{jharlim@psu.edu} \\
}

\begin{document}

\maketitle

\begin{abstract}

We develop a meshfree numerical framework for Hele--Shaw free boundary problems with surface tension and source terms based on geometric local parameterization and boundary integral methods. By decomposing the pressure into a particular solution and a harmonic component, the problem is reformulated into a boundary-only system, avoiding volumetric meshing of the evolving domain. For general source terms, we propose an eigenfunction-based approximation on a fixed domain and establish error estimates for both the truncation and coefficient approximation. Numerical experiments verify the accuracy and convergence of the proposed method, and an application to a tumor growth model demonstrates its effectiveness for coupled moving-boundary problems.

\end{abstract}

\section{Introduction}

Free boundary problems arise in a broad range of scientific applications, including fluid dynamics, porous media flow, and tumor growth \cite{friedman2000free,friedman2015free,HHHS}. Among them, the generalized Hele-Shaw problem with surface tension serves as a fundamental model for interface motion driven by pressure gradients under the influence of surface tension \cite{chen2003free,friedman2012variational,richardson1972hele}. This problem has been extensively studied since the seminal work of Saffman and Taylor \cite{saffman1958penetration}, following Hele-Shaw's pioneering experimental studies of viscous flow between two closely spaced plates \cite{HSH}. In its classical form, the pressure satisfies Laplace's equation in the evolving domain, and the interface velocity is determined by Darcy's law through the pressure gradient. Owing to the coupling between the pressure field and the evolving interface, the development of accurate and efficient numerical methods for Hele-Shaw problems remains a challenging task\cite{tryggvason1983numerical}.

A variety of numerical methods have been developed for Hele-Shaw free boundary problems and their applications, including finite difference methods, finite element methods, level-set methods, and phase field methods \cite{feng2004analysis,macklin2007nonlinear,ryskin1984numerical,sackinger1996newton}. Although these approaches have been successfully applied to a wide range of free boundary problems, they generally require discretization of the computational domain together with repeated interface reconstruction or mesh generation as the free boundary evolves, leading to increased computational cost.

Boundary integral methods provide an attractive alternative for Hele-Shaw problems because they exploit the Green's function of the governing equation to reduce the problem to the evolving interface, thereby avoiding volumetric discretization and reducing the dimensionality of the computation by one \cite{hou1994removing,pozrikidis1992boundary}. Moreover, rigorous convergence studies of boundary integral methods for free boundary problems have been established \cite{HHLS}, providing a solid theoretical foundation for their application. For example, we recently introduced a geometric local parameterization method based on generalized moving least squares (GMLS) for solving classical Hele–Shaw problems directly on point clouds \cite{zhang2026geometric}. By reconstructing local manifold parameterizations from unorganized boundary points, the method accurately approximates geometric quantities and boundary integrals without requiring global parameterization or remeshing.

Nevertheless, most existing formulations (including \cite{zhang2026geometric}) are limited to homogeneous equations or rely on parameterized interfaces and body-fitted meshes. When nonzero source terms are present, as in tumor growth and reactive porous media models, the resulting boundary integral formulation involves volume integrals over the evolving domain. Directly approximating the volume integral arising from the nontrivial source term eliminates the dimensionality reduction advantage offered by the boundary integral formulation. Furthermore, evaluating the time dependent volume integral term generally requires repeated interior discretization or mesh generation when the geometry is complex, resulting in significant computational cost.

In this work, we develop a numerical framework for generalized Hele–Shaw free boundary problems with source terms. By decomposing the pressure into a particular solution and a harmonic component, we reformulate the original problem as a boundary-only system, thereby eliminating repeated volume integration over evolving domains. For source terms whose particular solutions are not available in closed form, we propose an eigenfunction approximation based on Laplacian eigenfunctions defined on a fixed domain \cite{courant2024methods,shen1995efficient}. The associated Poisson problem is solved offline, enabling the subsequent free-boundary evolution of the harmonic component to be computed entirely through boundary integrals and solved using the geometric local parameterization method that we recently developed \cite{zhang2026geometric}. We further establish error estimates for the eigenfunction approximation and the numerical evaluation of the expansion coefficients. A series of numerical experiments validates the proposed framework through spatial and temporal convergence studies, demonstrates the convergence behavior of the eigenfunction approximation, and confirms that the pressure approximation converges at a higher rate than the source approximation. Finally, we apply the proposed method to a nutrient-driven tumor growth model, illustrating its capability for solving coupled moving-boundary problems with source terms without repeated volumetric remeshing.

The remainder of the paper is organized as follows. In Section \ref{sec:overview}, we briefly review the geometric local parameterization method developed in \cite{zhang2026geometric} for solving the Hele-Shaw free boundary problems with zero source terms. Section \ref{sec:non-zero-source} presents the numerical method for generalized Hele-Shaw problems with non-zero source terms. Section \ref{sec:eigenfunction-approx} introduces the eigenfunction approximation for the source term and establishes the corresponding error estimates. Finally, Section \ref{sec:applications} presents numerical results, including applications to a tumor growth model and extensions to more general source terms.

\section{Overview of the Hele-Shaw problem with surface tension and geometric local parameterization method}\label{sec:overview}

Consider a tissue or material that behaves as a porous medium. The velocity field $\mathbf V$ is described by Darcy's law, 
$\mathbf{V} = -\frac{\sigma}{\mu} \nabla p,$
where $p$ denotes the pressure, $\sigma$ is the permeability of the medium, and $\mu$ is the dynamic viscosity of the fluid. Combining with the conservation of mass, $\nabla \cdot \mathbf V=f$, where $f:\Omega(t) \to \BR$ represents distributed sources or sinks, leads to the following PDE system with a free boundary,
\begin{equation}\label{hele-shaw}
\left\{
\begin{array}{rcll}
-\frac{\sigma}{\mu}\Delta p &=& f & \text{in }\, \Omega(t), \\
p &=& \tau\kappa & \text{on }\, \Gamma(t), \\
\frac{\sigma}{\mu}\frac{\partial p}{\partial \mathbf n} &=& -V_n & \text{on }\, \Gamma(t).
\end{array}
\right.
\end{equation}
Here, \( \Omega(t) \) denotes the fluid-filled region, and \( \Gamma(t) \) denotes its moving boundary. In this paper, we consider a bounded two-dimensional domain $\Omega(t) \subset \mathbb{R}^2$ with a closed curve boundary $\Gamma(t)$ at any fixed time $t\geq 0$. In \eqref{hele-shaw}, \( \kappa \) denotes the mean curvature (for instance, $\kappa=r(t)^{-1}$ if $\Omega(t)$ is a disk of radius $r(t)$).  
The boundary condition $p=\tau\kappa$ models the effect of surface tension: the pressure jump across the interface is proportional to the local curvature, which acts to stabilize and regularize the evolving boundary. In classical Hele–Shaw problems without surface tension, the pressure on the boundary is typically prescribed as a constant or a given potential function \cite{HHLS}. The inclusion of surface tension distinguishes the present model, as the boundary pressure is not known a priori but instead couples dynamically to the evolving geometry through curvature.

For simplicity, we normalize the parameters by setting $\sigma=\mu$ and $\tau=1$ throughout the remainder of the paper.
We assume the kinematic boundary condition on the free boundary $\Gamma(t)$, which states that the boundary moves in accordance with the velocity $\mathbf V$, such that its component in the direction of the outward normal \(\mathbf n \) is given by
\begin{equation*}
V_n=\mathbf V\cdot\mathbf n=-\frac{\partial p}{\partial \mathbf n} \quad\text{on}~\Gamma(t),
\end{equation*}
with $\mathbf n$ pointing out of $\Omega(t)$. 

\subsection{Boundary integral formulation}
Denote $G(\mathbf x,\mathbf y)$ as the Green's function for the Laplacian operator, $-\Delta$. Specifically, \( G(\mathbf{x}, \mathbf{y}) = -\frac{1}{2\pi} \ln\|\mathbf{x} - \mathbf{y}\| \) for two-dimensional cases. Then from the Green's third identity \cite{friedman1963generalized,kress1989linear}, we have
\[-\int_\Omega G(\mathbf x,\mathbf y)\Delta p(\mathbf y) dV_{\mathbf y}-p(\mathbf x)=-\int_{\Gamma}\Big(G(\mathbf x,\mathbf y)\frac{\partial p(\mathbf y)}{\partial\mathbf n(\mathbf y)}-p(\mathbf y)\frac{\partial G(\mathbf x,\mathbf y)}{\partial\mathbf n(\mathbf y)}\Big) dS_{\mathbf y}, \quad\forall \mathbf x\in\Omega(t) \]

Setting $-\Delta p=f$, incorporating the jump condition and the boundary condition, we arrive at 
\[
\frac{\kappa(\mathbf{x})}{2} - \int_{\Omega} G(\mathbf x,\mathbf y)f(\mathbf y)dV_{\mathbf y}=\int_{\Gamma}\Big(-G(\mathbf{x},\mathbf{y})V_n(\mathbf y) -\kappa(\mathbf{y})\frac{\partial G(\mathbf{x},\mathbf{y})}{\partial \mathbf{n}(\mathbf y)}\Big)dS_\mathbf{y},\quad \mathbf{x},\mathbf{y} \in \Gamma(t),
\] 
where $\mathbf{n}=\frac{\nabla \Gamma}{|\nabla \Gamma|}$ with $\mathbf n(\mathbf y)$ referring to the unit outer normal vector at $\mathbf y\in\Gamma(t)$, and $\kappa=\nabla\cdot\mathbf{n}$.

Therefore, we have reformulated the Hele-Shaw free boundary problem \eqref{hele-shaw} as the following system,
\begin{subnumcases}{\label{BIM}}
\int_{\Gamma}G(\mathbf{x},\mathbf{y})V_n(\mathbf y) dS_\mathbf{y}=-\frac{\kappa(\mathbf{x})}{2} -\int_\Gamma \kappa(\mathbf{y})\frac{\partial G(\mathbf{x},\mathbf{y})}{\partial \mathbf{n}(\mathbf y)}dS_\mathbf{y} + \int_{\Omega}G(\mathbf x,\mathbf y)f(\mathbf y)dV_{\mathbf y}  ,\quad \mathbf{x},\mathbf{y} \in \Gamma(t),\label{BIE}\\
\frac{d\mathbf x}{dt} =  V_n(\mathbf x) \mathbf n(\mathbf x), \quad \mathbf x\in\Gamma(t),\label{velocity}
\end{subnumcases}
where the solution to \eqref{BIE} gives the unknown normal velocity $V_n$ on $\Gamma(t)$, which is represented by point clouds without parameterization information, and the kinematic condition \eqref{velocity} enables the free boundary to evolve forward in time. In particular, when $f=0$, corresponding to the classical Hele-Shaw free boundary problem, \eqref{BIE} reduces to a boundary integral equation (BIE), allowing the governing equations to be formulated solely on the free boundary. In the following section, we briefly summarize our previous work \cite{zhang2026geometric} that handles the homogeneous case $f=0$, which serves as the foundation for the development of the present work on problems with nontrivial source terms $f\neq 0$.

\subsection{Review of geometric local parameterization method}\label{sec:GMLS}
Now, we briefly summarize the numerical framework developed in our previous work \cite{zhang2026geometric} for the classical Hele-Shaw free boundary problem. With $f(\mathbf x)=0$, the system in \eqref{BIE}-\eqref{velocity} reduces to the following system,
\begin{subnumcases}{\label{BIM1}}
\int_{\Gamma}G(\mathbf{x},\mathbf{y})V_n(\mathbf y) dS_\mathbf{y}=-\frac{\kappa(\mathbf{x})}{2} -\int_\Gamma \kappa(\mathbf{y})\frac{\partial G(\mathbf{x},\mathbf{y})}{\partial \mathbf{n}(\mathbf y)}dS_\mathbf{y}  ,\quad \mathbf{x},\mathbf{y} \in \Gamma(t),\label{BIE1}\\
\frac{d\mathbf x}{dt} =  V_n(\mathbf x) \mathbf n(\mathbf x), \quad \mathbf x\in\Gamma(t),\label{velocity1}
\end{subnumcases}
where $\Gamma$ is a free boundary that will be characterized by $\mathbf x\in \Gamma(t)$. Throughout the discussion below, we assume that $\Gamma$ is smooth.

\subsubsection{Generalized Moving Least Squares approximation of the manifold parameterization}
First, we construct a local representation of the manifold through the Generalized Moving Least Squares (GMLS) method \cite{jiang2024generalized,mirzaei2012generalized,zhang2026geometric}. This approximation allows one to estimate higher-order geometric quantities, such as the mean curvature $\kappa$ along moving boundaries and naturally allows one to approximate the integration on the boundary directly. 

 Given a set of point cloud data $X=\{\mathbf x_i\}_{i=1}^N\subset\mathbb{R}^n$ sampled from a manifold $\Gamma$ of dimension $d$, let $K(i)=\{\mathbf x_{i_1},\dots,\mathbf x_{i_k}\}$ with $\mathbf x_i=\mathbf x_{i_1}$ denote the set of $k$-nearest neighbors for any point $\mathbf x_i$ in $X$. In this work, we focus on the case $d=1$ and $n=2$. The GMLS algorithm approximates a local parameterization of $\Gamma$ around $\mathbf x_i$ by the embedding map, $\hat{\iota}_i:T_{\mathbf{x}_i}\Gamma \equiv\mathbb{R} \to  \Gamma \subset\BR^2$, defined by
\begin{equation}\label{gmls-map}
\hat{\iota}_i(s)=\mathbf x_i+\mathbf{\hat{t}}_is+\mathbf{\hat{n}}_i p_i(s),
\end{equation}
where $\hat{\mathbf{t}}_i$ and $\hat{\mathbf n}_i$ are estimates of tangent and normal vectors at $\mathbf x_i$, respectively. See Appendix~\ref{appx:GMLS} for details on how to construct these basis vectors. Also, $p_i(s)$ is an intrinsic polynomial of degree $\ell$,
\[p_i(s)=\alpha_{i,1}s+\alpha_{i,2} s^2+\cdots+\alpha_{i,\ell} s^\ell\] 
whose coefficients are determined via a local least-squares fit using the data pairs from $T(i)=\{t_1=\mathbf{\hat{t}}_i\cdot(\mathbf x_{i_1}-\mathbf x_{i_1}),\dots,t_k=\mathbf{\hat{t}}_i\cdot(\mathbf x_{i_k}-\mathbf x_{i_1})\}\subset T_{\mathbf x_i}\Gamma$ and $N(i) =\{n_1=\mathbf{\hat{n}}_i\cdot(\mathbf x_{i_1}-\mathbf x_{i_1}),\dots,n_k=\mathbf{\hat{n}}_i\cdot(\mathbf x_{i_k}-\mathbf x_{i_1})\}$, 
\[(\alpha_{i,1},\dots,\alpha_{i,\ell})=\arg\min \sum_{j=1}^k\Big(p_i(t_j)-n_j\Big)^2.
\]

Let $\gamma_i(s)=(s,p_i(s))$ be the coordinates in \eqref{gmls-map}, which is a local parametric representation of $\Gamma(t)$ near the point $\mathbf x_i$. The curvature near $\mathbf x_i$ is then approximated by,
\begin{equation}\label{cur_gmls}
\kappa_i(s) =-\frac{\text{det}(\gamma',\gamma'')}{||\gamma'||^3}= -\frac{p_i''(s)}{\Big(1+\big(p_i'(s)\big)^2\Big)^{\frac{3}{2}}},
\end{equation}
with the estimated curvature at $\mathbf x_i$ denoted by $\kappa_i=\kappa_i(0)$.

\subsubsection{Spatial and temporal discretization}
Once the GMLS approximation of the local manifold parameterization and curvature have been constructed, we can discretize the BIE \eqref{BIE} in space, which can be written as a linear system $AV_n(\mathbf{x})=b(\mathbf{x})$, with 
\begin{eqnarray}
A V_n(\mathbf{x}) &=& \int_{\Gamma} G(\mathbf x,\mathbf y) V_n(\mathbf y) dS_{\mathbf y}, \label{eq:AVn} \\ b (\mathbf{x})&=&-\frac{\kappa(\mathbf x)}{2}-\int_\Gamma \kappa(\mathbf y)\nabla G(\mathbf x,\mathbf y)\cdot \mathbf{n}(\mathbf y)dS_{\mathbf y}, \label{eq:b}
\end{eqnarray}
where $G(\mathbf x,\mathbf y)=-\frac{1}{2\pi}\ln\|\mathbf x-\mathbf y\|$ and $\nabla G(\mathbf x,\mathbf y) = \frac{1}{2\pi}\frac{\mathbf x-\mathbf y}{\|\mathbf x-\mathbf y\|^2}$.

Locally, near $\mathbf{x}_j$, we approximate
\BEA dS_{\mathbf y} = \sqrt{|\iota_j'(s)|}ds \approx \sqrt{|\hat{\iota}_j'(s)|}ds = \sqrt{1+\big(p_j'(s)\big)^2}ds, \label{eq:approxJac}\EEA
where we use the local parameterization $\mathbf y=\hat{\iota}_j(s)$ in \eqref{gmls-map} near $\mathbf{x}_j$ for all $j=1,\ldots, N$. In terms of the local coordinate system through a change of variables, at $\mathbf x=\mathbf x_i\in\Gamma(t)$,
\BEA\label{eqn:A-int}
AV_n(\mathbf{x}_i)&\approx& \int_{\Delta s_{-i}}^{\Delta s_i} G\big(\mathbf x_i,\hat{\iota}_i(s)\big)V_n\big(\hat{\iota}_i(s)\big)\sqrt{1+\big(p_i'(s)\big)^2}ds 
\notag \\ &&+\sum_{j\neq i-1,i}\int_0^{\Delta s_j} G\big(\mathbf x_i,\hat{\iota}_j(s)\big)V_n\big(\hat{\iota}_j(s)\big)\sqrt{1+\big(p_j'(s)\big)^2}ds,\label{eq:AVdetail}
\EEA
 where $\Delta s_i={\hat{\mathbf{t}}_i}^\top(\mathbf x_{i+1}-\mathbf x_i)>0$, $\Delta s_{-i}={\hat{\mathbf{t}}_i}^\top(\mathbf x_{i-1}-\mathbf x_{i})<0$. 

Next, we formulate $b(\mathbf x_i)$ with the local parameterized volume in \eqref{eq:approxJac} and write the integral in terms of the local coordinate system through a change of variables,
\begin{equation}\label{eqn:b_int}
\begin{aligned}
b(\mathbf x_i)\approx-\frac{\kappa(\mathbf x_i)}{2}&-\frac{1}{2}\Bigg[\int_{\Delta s_{-i}}^{\Delta s_i} \kappa\big(\hat{\iota}_i(s)\big)\cdot\Big(\nabla G\big(\mathbf x_i,\hat{\iota}_i(s)\big)\cdot\hat{\mathbf{n}}\big(\hat{\iota}_i(s)\big)\Big)\sqrt{1+\big(p_i'(s)\big)^2}ds\\
&+\sum_{j\neq i}\int_{\Delta s_{-j}}^{\Delta s_j} \kappa\big(\hat{\iota}_j(s)\big)\Big(\nabla G\big(\mathbf x_i,\hat{\iota}_j(s)\big)\cdot \hat{\mathbf{n}}\big(\hat{\iota}_j(s)
\big)\Big)\sqrt{1+\big(p_j'(s)\big)^2}ds\Bigg].
\end{aligned}
\end{equation}

Subsequently, we employ standard quadrature rules to derive a discrete approximation of $AV_n(\mathbf x_i)$ and $b(\mathbf x_i)$. In the numerical implementation, Simpson’s rule is used to approximate $b(\mathbf x)$ yielding the vector $\mathbf b$, while the trapezoidal rule is used to approximate $AV_n(\mathbf x)$, resulting in the matrix $\mathbf A$. Notice that the first integrals in \eqref{eq:AVdetail} and \eqref{eqn:b_int} are singular when $s=0$, because $\hat{\iota}_i(0)=\mathbf x_i$. For additional details on the treatment of the singularities, we refer to our previous work \cite{zhang2026geometric}.
Therefore, we obtain the linear system $\mathbf A \mathbf{\tilde{V}}_n = \mathbf b$, which can be solved for the discrete approximate $\tilde{\mathbf V}_n$ of the normal velocity. Here, $\tilde{\mathbf V}_n$ is an approximation to $\mathbf V_n$, whose $i$th component is $(\mathbf{V}_n)_i = V_n(\mathbf x_i)$.

Finally, we apply to time integration schemes to discretize \eqref{velocity}, $\frac{d\mathbf x}{dt}=\tilde{\mathbf V}_n\hat{\mathbf n}$ with $\tilde{\mathbf V}_n=\mathbf A^{-1}\mathbf b$. In the following numerical experiments, we will consider both the Forward Euler scheme and the second-order Runge-Kutta scheme.

\section{Numerical method for non-zero source problems}\label{sec:non-zero-source}
For the Hele-Shaw free boundary problem with $f\neq0$, the BIE \eqref{BIE} contains a volume integral over the evolving domain $\Omega(t)$. A direct quadrature approximation to this term will require the construction of a mesh on $\Omega(t)$ at every time step as the boundary evolves, which is computationally expensive. In this section, we introduce an alternative method to circumvent this difficulty. Specifically, we consider
the following reformulation to the original system \eqref{hele-shaw}. Let us decompose  $p=p_1+p_2$ such that,
\begin{equation}\label{eqn:system1}
    -\Delta p_1=f \quad\text{on}~\overline{\Omega},
\end{equation}

and
\begin{equation}\label{eqn:system2}
    \left\{
    \begin{array}{rcll}
    -\Delta p_2 &=& 0 &\text{on}~\Omega(t),\\
    p_2 & =& \kappa - \left.p_1\right|_{\Gamma}&\text{on}~\Gamma(t),\\
    \frac{\partial p_2}{\partial \mathbf n}&=&-V_n-\left.\frac{\partial p_1}{\partial \mathbf n}\right|_{\Gamma}&\text{on}~\Gamma(t),
    \end{array}
    \right.
\end{equation}
where $\Omega(t)\subset\overline{\Omega}$ for all $t\geq 0$ and $\overline{\Omega}$ is independent of time.

This decomposition allows one to solve \eqref{eqn:system1} for $p_1$ on $\overline{\Omega}$ once, which can be done offline, since this is a standard Poisson problem with a fixed domain. Subsequently, we can apply the boundary integral formulation to \eqref{eqn:system2}, yielding following system,
\begin{subnumcases}{\label{BIM2}}
\int_{\Gamma} G(\mathbf x,\mathbf y)\left(V_n+\left.\frac{\partial p_1(\mathbf y)}{\partial \mathbf n(\mathbf y)}\right|_{\Gamma}\right)dS_{\mathbf y}=-\frac{\kappa(\mathbf x)-\left.p_1(\mathbf x)\right|_{\Gamma}}{2}-\int_{\Gamma} \left(\kappa(\mathbf y)-\left.p_1(\mathbf y)\right|_{\Gamma}\right)\frac{\partial G(\mathbf x,\mathbf y)}{\partial\mathbf n(\mathbf y)} dS_{\mathbf y}  ,\quad \mathbf{x},\mathbf{y} \in \Gamma(t),\label{BIE2}\\
\frac{d\mathbf x}{dt} =  V_n(\mathbf x) \mathbf n(\mathbf x), \quad \mathbf x\in\Gamma(t).\label{velocity2}
\end{subnumcases}
Then, we can apply the numerical scheme discussed in Section \ref{sec:GMLS} to solve \eqref{velocity2} to obtain $V_n$.

\begin{remark}
    We don't specify the boundary conditions for $p_1$ in the system \eqref{eqn:system1}. Indeed, for a given source term $f$, the volume potential is uniquely determined by $f$ and is independent of the particular boundary conditions used to define $p_1$. Subtracting \eqref{BIE2} from \eqref{BIE} yields
    \[\int_{\Omega} G(\mathbf x,\mathbf y)f(\mathbf y)dV_{\mathbf y}=\frac{\left. p_1(\mathbf x)\right|_{\Gamma}}{2}+\int_{\Gamma}\left. p_1(\mathbf y)\right|_{\Gamma}\frac{\partial G(\mathbf x,\mathbf y)}{\partial\mathbf n(\mathbf y)}dS_{\mathbf y}-\int_{\Gamma}G(\mathbf  x,\mathbf y)\left.\frac{\partial p_1(\mathbf y)}{\partial \mathbf n(\mathbf y)}\right|_{\Gamma}dS_{\mathbf y},\]
    which indicates that any change in $p_1$ is determined by the volume potential, which depends solely on the source term $f$. 
\end{remark}

Next, we present a numerical example for which \eqref{eqn:system1} admits analytical solutions for the corresponding source term $f$.
More general cases, where \eqref{eqn:system1} does not admit analytical solutions, are discussed in Section \ref{sec:eigenfunction-approx}.

\paragraph{Free boundary problem with radial source term.}
We consider the free boundary equation with a radial source term,
\begin{equation}\label{eq:FBP_radial_source}
\left\{
\begin{array}{rcll}
-\Delta p &=& r^2& \text{in }\, \Omega(t), \\
p &=& \kappa & \text{on }\, \Gamma(t), \\
\frac{\partial p}{\partial \mathbf n} &=& -V_n& \text{on }\, \Gamma(t),
\end{array}
\right.
\end{equation}
where $r=\sqrt{x_1^2+x_2^2}$ is the radial coordinate. By decomposition, $p=p_1+p_2$, we reformulate \eqref{eq:FBP_radial_source} as 
\begin{equation}\label{eqn:system1_radial_source}
    -\Delta p_1=r^2 \quad\text{on}~\overline{\Omega},
\end{equation}
where $p_1(r) = -\frac{1}{16}r^4$ is a particular solution of the system \eqref{eqn:system1_radial_source},
and
\begin{equation}\label{eqn:system2_radial_source}
    \left\{
    \begin{array}{rcll}
    -\Delta p_2 &=& 0 &\text{on}~\Omega(t),\\
    p_2 & =& \kappa - \left.p_1\right|_{\Gamma}&\text{on}~\Gamma(t),\\
    \frac{\partial p_2}{\partial \mathbf n}&=&-V_n-\left.\frac{\partial p_1}{\partial \mathbf n}\right|_{\Gamma}&\text{on}~\Gamma(t).
    \end{array}
    \right.
\end{equation}
We consider the initial condition in which $\Omega(t)$ is a disk of radius 2 at $t=0$ and $\overline{\Omega}$ is a disk of radius $\overline{R}=3$, both centered at the origin. Then the corresponding initial point clouds are defined as
\begin{equation}\label{eq:IC1}
\mathbf x(0)=\Big(x_1(0),x_2(0)\Big)=\Big(2\cos(\theta_j),2\sin(\theta_j)\Big),\quad \theta_j\in[0,2\pi],
\end{equation}
where $\theta_j = \frac{2\pi j}{N}$ and $N$ is the number of sampling points. For the circular case, the general solution of $-\Delta p_2=0$ is $p_2(r)=C_1\log(r)+C_2$. To ensure $p_2(r)$ is finite at $r=0$, we have $C_1=0$ and $p_2(r) = C_2=\kappa-\left.p_1\right|_{\Gamma}$ which implies $V_n+\left.\frac{\partial p_1}{\partial \mathbf n}\right|_{\Gamma}=0$. Therefore, $\frac{dR(t)}{dt}=V_n=-\left.\frac{\partial p_1}{\partial\mathbf n}\right|_{\Gamma}=\frac{R(t)^3}{4}$, where $R(t)$ denotes the radius of the evolving domain $\Omega(t)$.

Therefore, the solution to \eqref{eqn:system2_radial_source} at any time $t$ is given by
\begin{equation*}
\begin{aligned}
    R(t) &= \left(\frac{1}{4}-\frac{1}{2}t\right)^{-\frac{1}{2}},\quad t<0.5,\\
    \mathbf x(t)&=\Big(x_1(t),x_2(t)\Big)=\Big(R(t)\cos(\theta),R(t)\sin(\theta)\Big),\quad\theta\in[0,2\pi].
    \end{aligned}
\end{equation*}
Then we conduct the mesh refinement tests to check the order of spatial convergence and temporal convergence. First, we apply our method to discretize \eqref{eqn:system2_radial_source} directly on point clouds with $N=100$, $200$, $400$, $800$, and use the Forward Euler with time step $\Delta t=10^{-5}$. We quantify the numerical error in approximating $V_n(\mathbf x)$ with, \[\|e_V\|_{L^2}=\sqrt{\frac{1}{|\Gamma|}\int_\Gamma \|\tilde{\mathbf V}_n(\mathbf x)- V_n(\mathbf x)\|^2dS_{\mathbf x}}\approx\sqrt{\frac{1}{N}\sum_{i=1}^N \Big((\tilde{\mathbf V}_n)_i- V_n(\mathbf x_i)\Big)^2},\] 
at the time $t=5\times10^{-2}$. In Figure \ref{fig:space-convergence}, we show the error rate is dominated by the error induced by the quadrature rule in the estimation of $b$, consistent with the results obtained in our previous work in \cite{zhang2026geometric} for no source term, $f=0$. Next, we conduct the time convergence studies with $N=400$ and calculate the numerical solutions to $t=5\times10^{-2}$ with various time steps. We define the error for radius of the disk with $\|e_R\|_{\ell^2}=\|\tilde{R}-R(t)\|_2$,
where $\tilde{R}$ denotes approximate radius in time. Figure \ref{fig:time-convergence} indicates the first-order accuracy in time with the Forward Euler method. Also, the evolution dynamics with $N=400$, $\Delta t=10^{-5}$, and Forward Euler method are shown in Figure \ref{fig:boundary-dynamics} which depicts the boundary at various times, $t=0$, $0.05$, and $0.1$. The absolute error in the radius over time {using the forward Euler and second-order Runge-Kutta time integration schemes} is shown in Figure \ref{fig:radius-error}.

\begin{figure}[htbp]
\center
\subfigure[Spatial mesh refinement test for $V_n$.]{\includegraphics[width=0.45\linewidth]{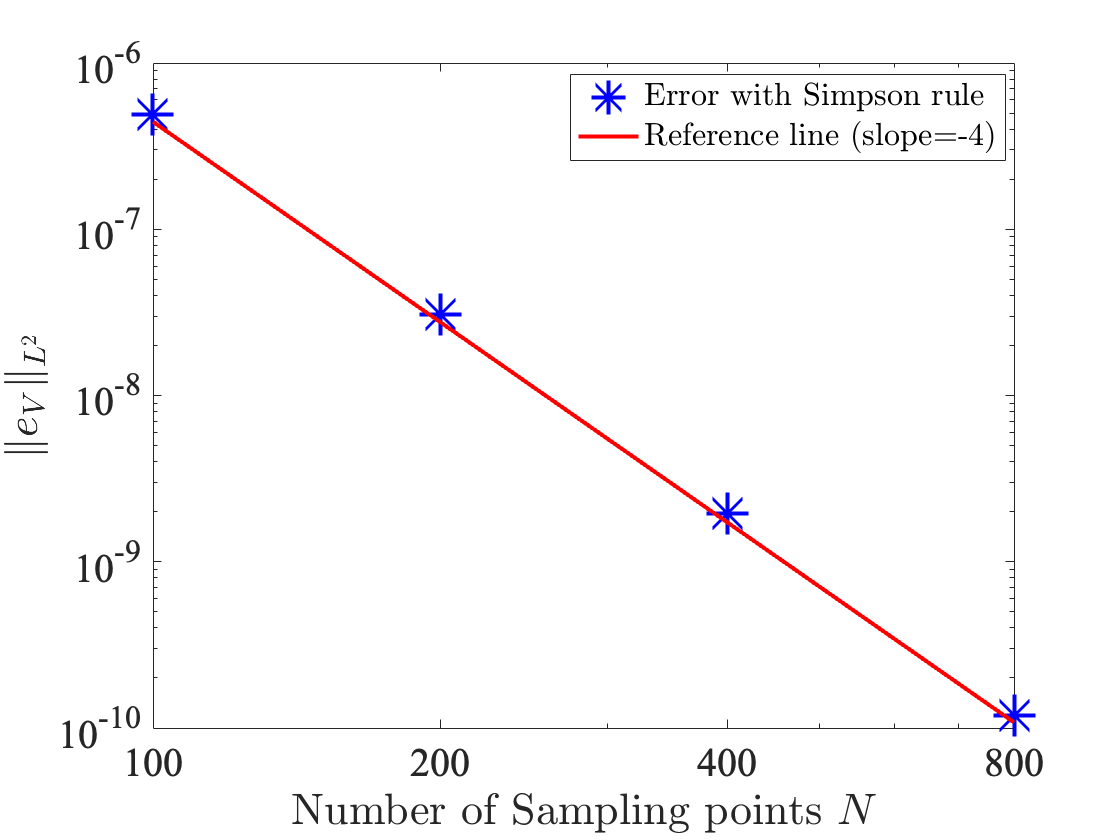}
\label{fig:space-convergence}}
\subfigure[Temporal mesh refinement test for radius of the circle.]{\includegraphics[width=0.45\textwidth]{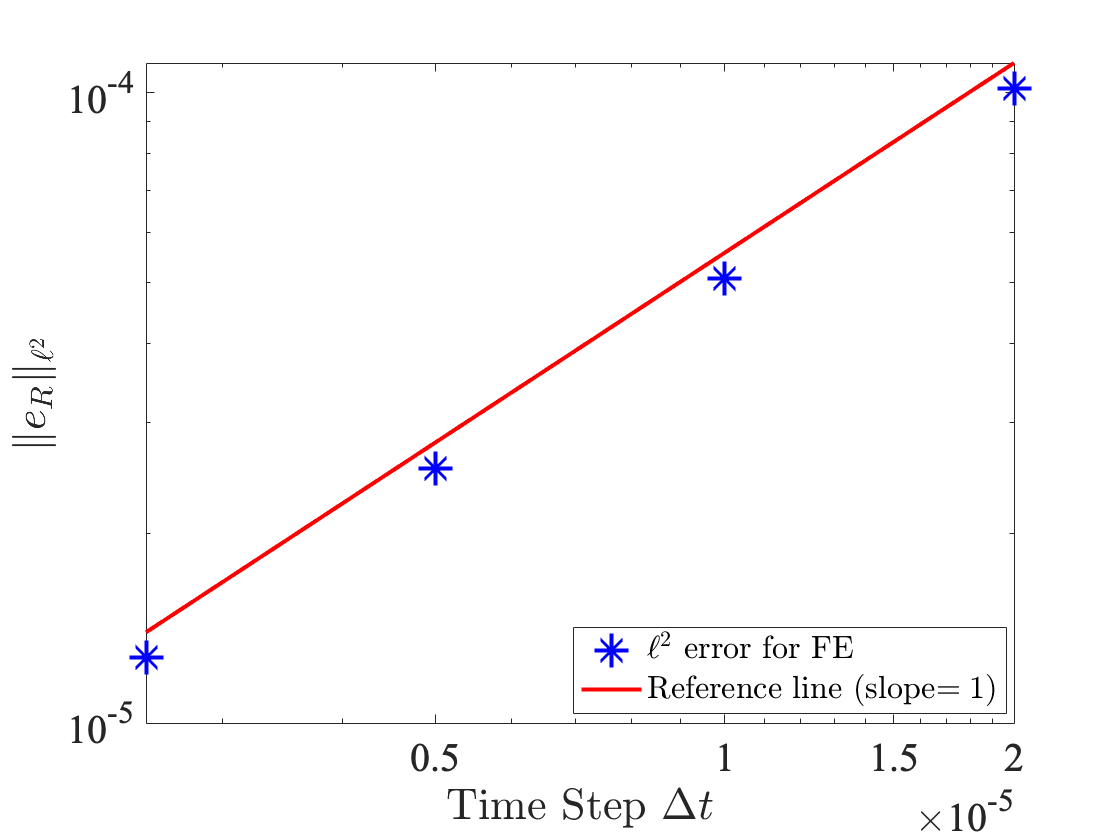}
\label{fig:time-convergence}}
\caption{Mesh refinement tests.}
\label{fig:convergence-test}
\end{figure}

\begin{figure}[htbp]
\center
\subfigure[Evolution of the circle boundary over time.]{\includegraphics[width=0.45\textwidth]{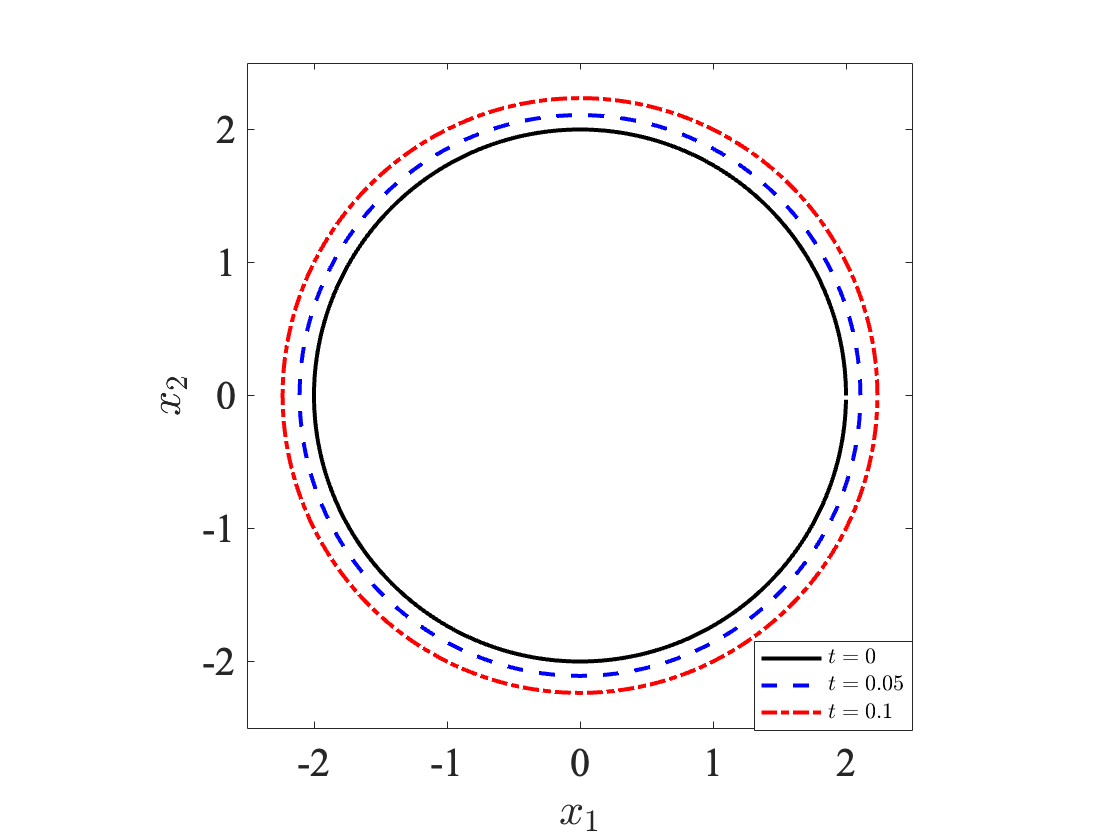}\label{fig:boundary-dynamics}}
\subfigure[Error for the circle radius over time.]{\includegraphics[width=0.45\textwidth]{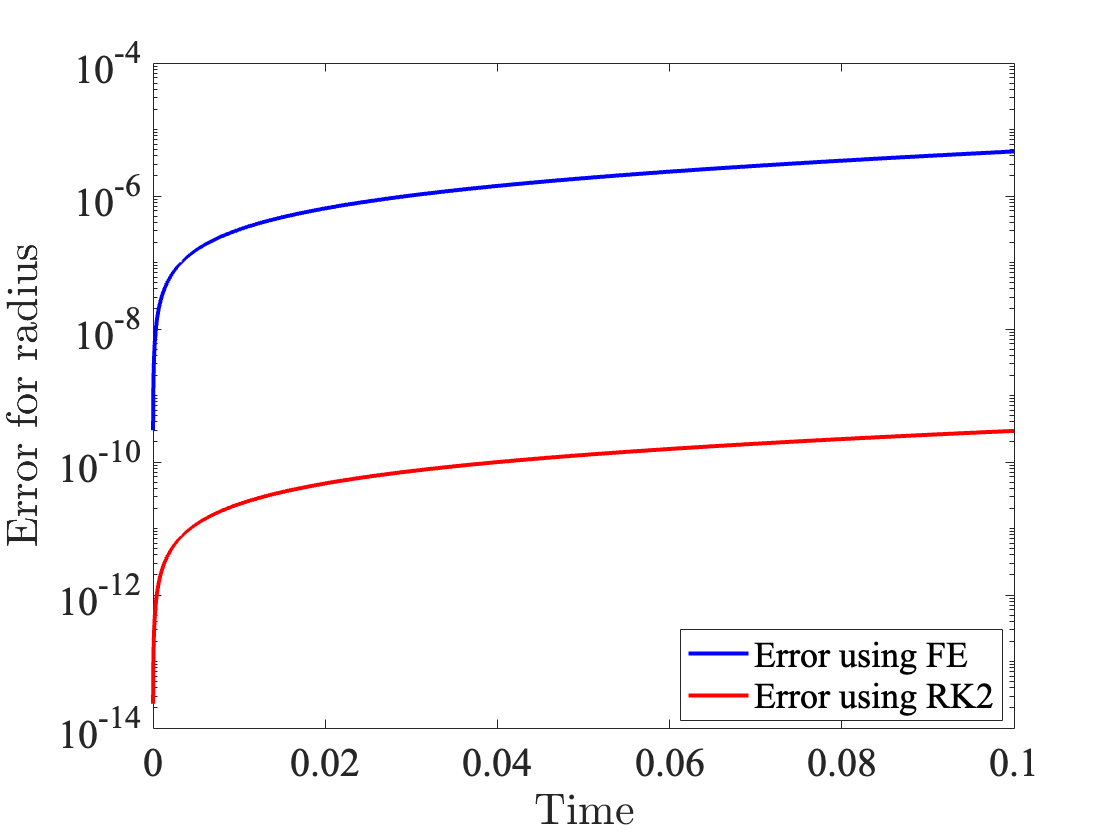}\label{fig:radius-error}}
\caption{The evolution dynamics for the circular case. In (a), the profiles of $\mathbf x(t)$ at $t=0$, $0.05$, $0.1$ are shown; (b) The errors for radius $R(t)$ are plotted at a time interval $[0,0.1]$.}
\label{fig:circle}
\end{figure}

\section{Eigenfunction approximation of the source term}\label{sec:eigenfunction-approx}
In this section, we consider the case in which the system \eqref{eqn:system1} does not admit an analytical solution. To approximate the radial source function $f(r)$ on the disk $\overline{\Omega}$ of radius $\overline{R}$ with $r\in[0,\overline{R}]$, we employ the eigenfunction expansion method. The resulting eigenfunction expansion yields an approximation of $f$, while the corresponding approximation of the solution $p_1$ is obtained through the associated eigenvalue relation. We then establish approximation error estimates for the proposed eigenfunction-based approximation.

\subsection{Eigenfunction expansion}\label{subsec:eigenfunction}
To construct the eigenfunction-based approximation, we consider the following homogeneous Neumann eigenvalue problem associated with the negative Laplacian on the disk $\overline{\Omega}$ of radius $\overline{R}$,
\[-\Delta\phi_m=\lambda_m\phi_m,\quad \frac{\partial\phi_m}{\partial\mathbf n}=0~\text{on}~\partial\overline{\Omega},\]
where $\Delta$ denotes the two-dimensional radial Laplacian. For radial functions, the eigenfunctions are given by 
\BEA\phi_m(r)=J_0\left(\frac{\beta_m r}{\overline{R}}\right),\quad \lambda_m=\left(\frac{\beta_m}{\overline{R}}\right)^2,\quad m=1,2,\cdots,\label{eigensolutions}\EEA
where $J_0$ is the Bessel function of the first kind of order zero and $\beta_m$ denotes the $m$-th positive root of $J_1$. The eigenfunctions $\{\phi_m\}_{m=1}^\infty$ form an orthogonal basis of the mean-zero subspace of $L^2_r(0,\overline{R})$, which is a weighted space with inner product defined as $\langle\phi_i,\phi_j\rangle_{L^2_r}=\int_0^{\overline{R}}\phi_i(r)\phi_j(r)rdr$. 

Since the Neumann eigenfunctions span only the mean-zero subspace, the source function must satisfy the compatibility condition,
\BEA\int_0^{\overline{R}}f(r)rdr=0.\label{eq:meanzero}
\EEA
To accommodate a general source function $f$, we decompose 
\BEA
p_1=\widehat{p}_1+\overline{p}_1,\quad f=f_0+\overline{f},\label{eq:meanzerodecomposition}
\EEA
where $\overline{f}=\frac{2}{\overline{R}^2}\int_0^{\overline{R}}f(r)rdr$. By construction, $\int_0^{\overline{R}}f_0(r)rdr=0$.

Then the Poisson problem in \eqref{eqn:system1} with radial function $f$ becomes
\[
-\Delta \widehat{p}_1=f_0,\quad-\Delta\overline{p}_1=\overline{f}.\]

Since $\overline{f}$ is a constant, $\overline{p}_1(r)=-\frac{\overline{f}}{4}r^2$ is a particular solution. Therefore, it remains to approximate $\widehat{p}_1$. By construction, $f_0$ satisfies the compatibility condition and hence it can be approximated by the truncated eigenfunction expansion
\BEA
f_0(r)\approx f_{0,M}(r)=\sum_{m=1}^M b_m\phi_m(r),\text{ where }b_m=\frac{\int_0^{\overline{R}}f_0(r)\phi_m(r)rdr}{\int_0^{\overline{R}} \phi_m^2(r)rdr}\label{eq:f_0M}\EEA
In the numerical implementation, the integrals are approximated using Gauss-Legendre quadrature. 
Together, we effectively approximate the source term as,
$$f\approx f_M:=f_{0,M}+\overline{f}.$$
Then the approximation of the solution $\widehat{p}_1$ follows directly from the eigenvalue relation 
\BEA\widehat{p}_1\approx\widehat{p}_{1,M}=\sum_{m=1}^Ma_m\phi_m,\quad a_m=\frac{b_m}{\lambda_m}.\label{eq:hatp1M}\EEA
Finally, we have
\[p_1(r)\approx p_{1,M}(r)=\widehat{p}_{1,M}(r)+\overline{p}_1(r) = \sum_{m=1}^M\frac{b_m}{\lambda_m}\phi_m(r) -\frac{\overline{f}}{4}r^2.\]

\paragraph{Free boundary problem with radial source term $f(r)=r^2$.} 
We next examine the effectiveness of the eigenfunction approximation. To facilitate comparison with the previous example, we consider the same system \eqref{eq:FBP_radial_source} with source term $f(r)=r^2$. Instead of using the exact source term, we approximate $f_0=f-\overline{f}$ by the truncated eigenfunction expansion, where $\overline{f}=\frac{2}{\overline{R}^2}\int_0^{\overline{R}} f(r)rdr=\frac{\overline{R}^2}{2}$ is the weighted radial mean. Similar to previous experiment, we set $\overline{R}=3$. We apply the proposed eigenfunction approximation using Gauss-Legendre quadrature with $N_r=2000$ quadrature points and consider $M=50, 100, 200, 400$ eigenfunctions. To assess the approximation accuracy, we evaluate the numerical error in the source function $f$ and the corresponding solution $p_1$ using the weighted $L^2$-norm,
\[\|e_f\|_{L^2_r}=\left(\int_0^{\overline{R}} \left(f(r)-f_M(r)\right)^2rdr\right)^{\frac{1}{2}},\quad \|e_p\|_{L^2_r}=\left(\int_0^{\overline{R}} \left(p_1(r)-p_{1,M}(r)\right)^2rdr\right)^{\frac{1}{2}}.\]  
We note that the Neumann problem determines $\widehat{p}_1$ only up to an additive constant. Therefore, we impose the normalization condition $\int_0^{\overline{R}} \widehat{p}_1(r)rdr=0$. The same normalization is applied to the approximation $\widehat{p}_{1,M}$ to have zero weighted mean before computing the approximation error. First, we investigate the convergence behavior of $\|e_f\|_{L^2_r}$ and $\|e_p\|_{L^2_r}$ as the number of eigenfunctions functions $M$ increases. As shown in Figure \ref{fig:r2_approx_error}, the source approximation error $\|e_f\|_{L^2_r}$ decays approximately as $M^{-1.5}$ while the solution error $\|e_p\|_{L^2_r}$ decays approximately as $M^{-3.5}$. The factor of $M^2$ faster convergence for $\|e_p\|_{L^2_r}$ will be clarified in the next section.

\begin{figure}[htbp]
\center
\subfigure[]{\includegraphics[width=0.45\linewidth]{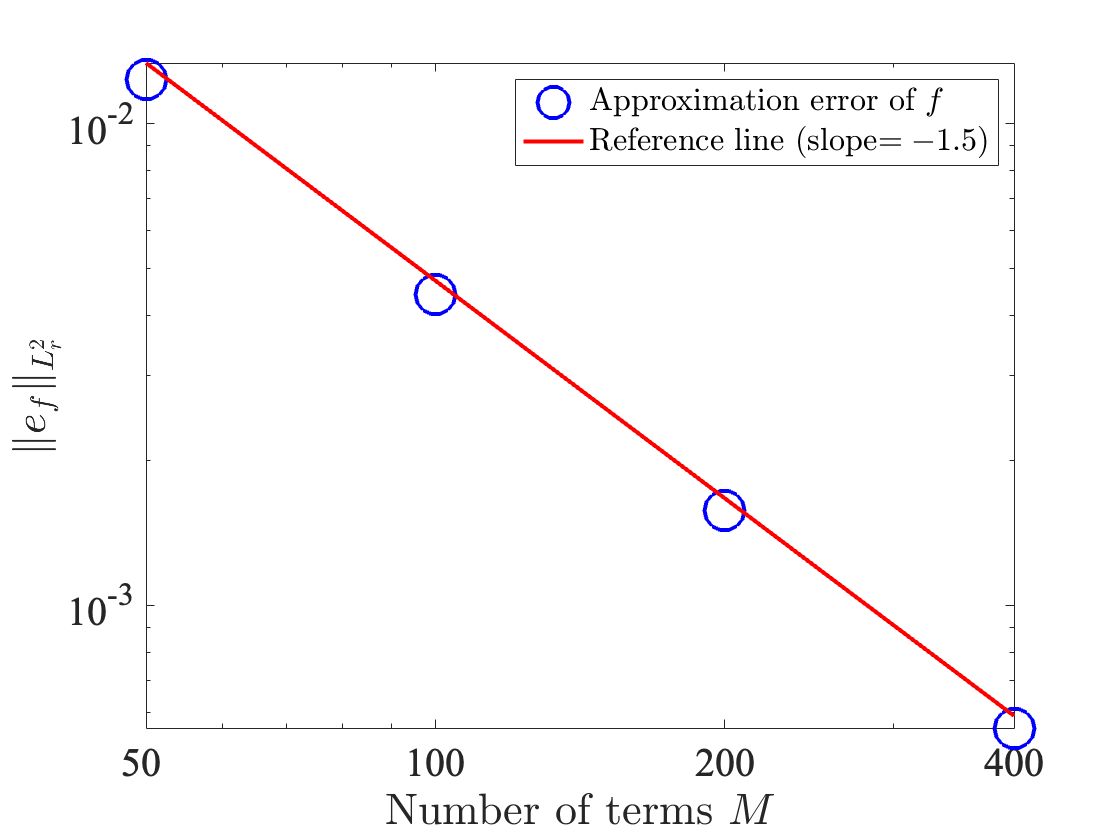}
\label{fig:error_f}}
\subfigure[]{\includegraphics[width=0.45\textwidth]{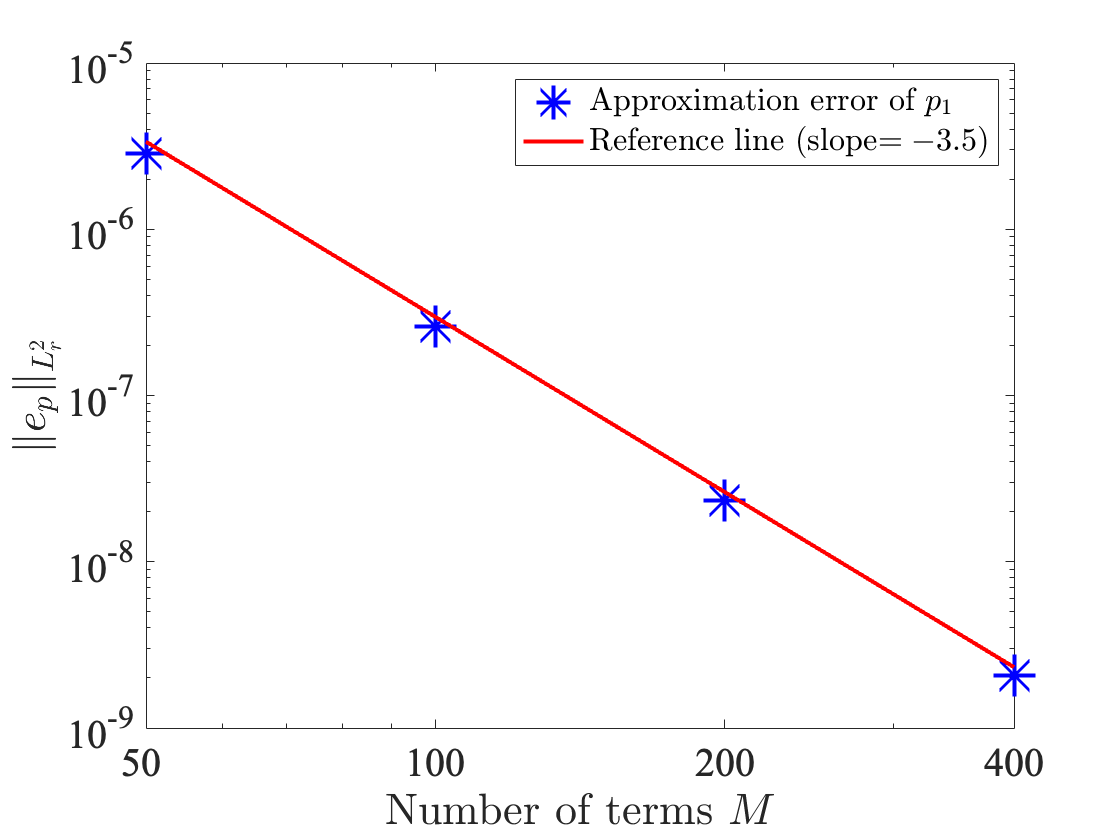}
\label{fig:error_p1}}
\caption{Weighted $L^2$ errors for the approximation of the source function $r^2$ and the corresponding solution $p_1$.}
\label{fig:r2_approx_error}
\end{figure}

Next, we investigate the dominant error contribution by comparing the total numerical error produced by the proposed eigenfunction approximation with that obtained when the exact source term is used in the computation. The total error in solving \eqref{eq:FBP_radial_source} by approximating $f$ is measured by the difference between the numerical radius and the exact radius. We first fix the number of eigenfunctions at $M=200$. Figure \ref{fig:error_fd} shows that the time discretization error dominates when Forward Euler scheme is employed. In contrast, Figure \ref{fig:error_rk2} indicates that the error associated with the eigenfunction approximation becomes dominant when the second-order Runge-Kutta method is used. The resulting radius error exhibits an oscillatory behavior in time, reflecting the truncation error introduced by the Bessel eigenfunction approximation. When the number of eigenfunctions is increased to $M=1500$, the eigenfunction approximation error continues to dominate the total error, while the oscillatory behavior is substantially reduced.

\begin{figure}[htbp]
\center
\subfigure[Radius errors using the Forward Euler scheme.]{\includegraphics[width=0.45\linewidth]{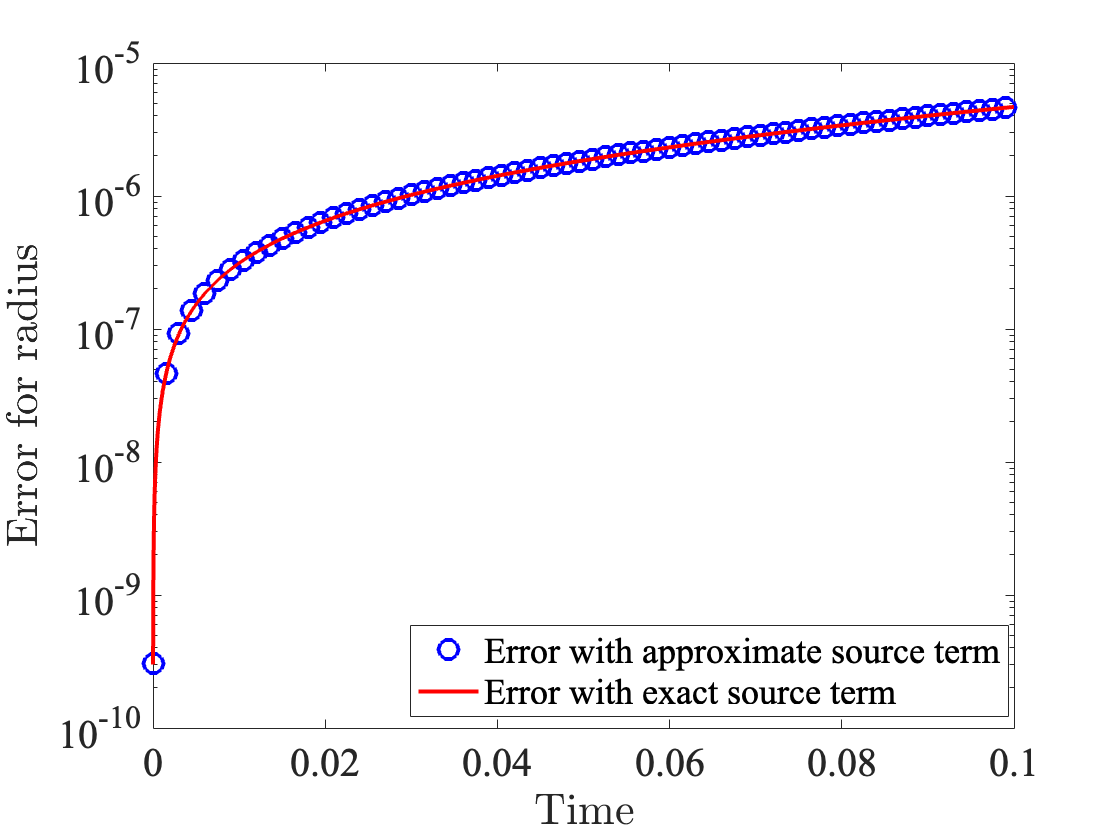}
\label{fig:error_fd}}
\subfigure[Radius error using the second-order Runge-Kutta scheme.]{\includegraphics[width=0.45\textwidth]{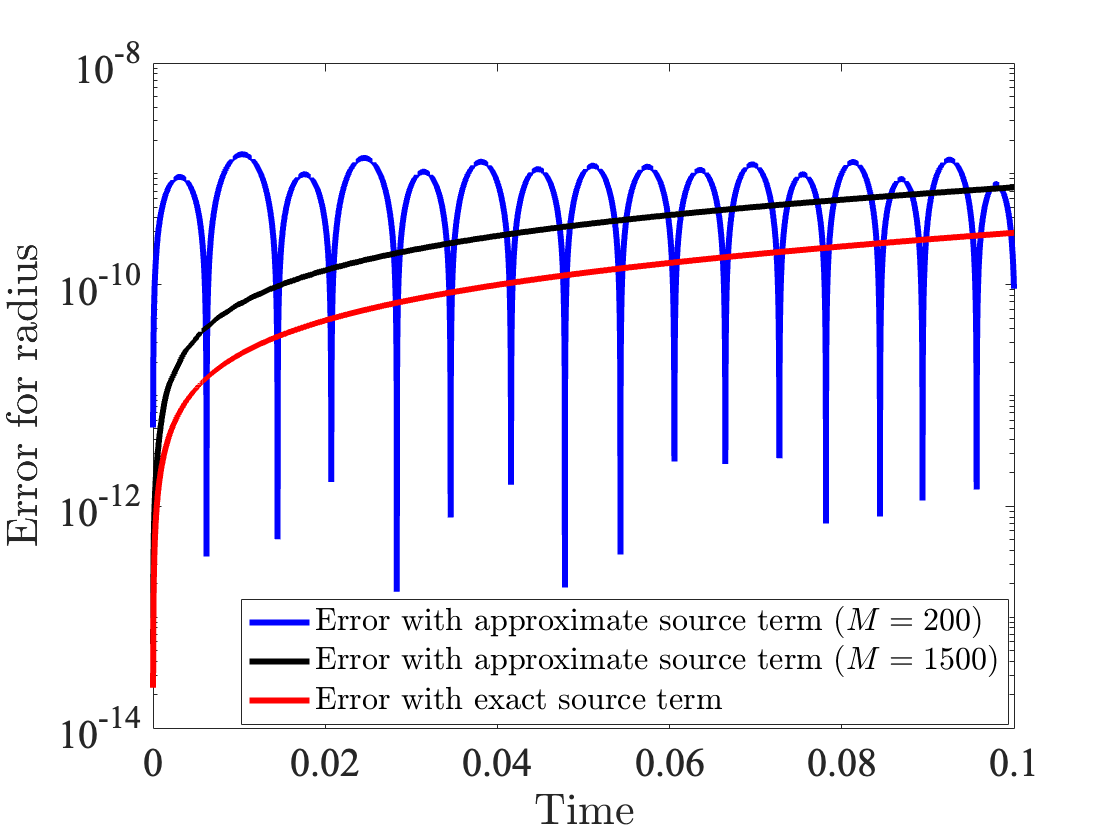}
\label{fig:error_rk2}}
\caption{Comparison of the radius errors obtained using the exact source term and the eigenfunction approximation. The Forward Euler scheme is shown in (a), while the second-order Runge-Kutta scheme is shown in (b).}
\label{fig:total}
\end{figure}
{
Although $r^2$ is analytic, it does not satisfy the Neumann boundary conditions. To investigate the influence of these compatibility conditions on the convergence rate, we next consider the source term which satisfies the homogeneous Neumann boundary condition at $r=\overline{R}$.

\paragraph{Free boundary problem with radial source term $f(r)=\cos\left(\frac{2\pi r^2}{\overline{R}^2}\right)$.} Since $f(r)=\cos\left(\frac{2\pi r^2}{\overline{R}^2}\right)$ is already mean-zero, we approximate $f$ by the truncated eigenfunction expansion. We use the same numerical settings as in the previous example, namely $\overline{R}=3$, $N_r=2000$ Gauss-Legendre quadrature points, and $M=50$, $100$, $200$, and $400$ eigenfunctions. The corresponding solution is $\widehat{p}_1=-\frac{\overline{R}^2}{8\pi}Si\left(\frac{2\pi r^2}{\overline{R}^2}\right) +C$ where $Si(x)=\int_0^x \frac{\sin z}{z}dz$ is the sine integral and the constant $C$ is determined by the normalization condition $\int_0^{\overline{R}} \widehat{p}_1(r)rdr=0$. We then investigate the convergence behaviour of the source approximation error $\|e_f\|_{L_r^2}$ and the corresponding solution error $\|e_p\|_{L_r^2}$, where the latter is computed with respect to the reference solution evaluated in MATLAB. As shown in Figure \ref{fig:Neumann_approx_error}, the source approximation error decays at an approximate rate of $M^{-3.5}$, which is two orders higher than that observed for $f(r)=r^2$. The corresponding solution error exhibits an approximate convergence rate of $M^{-5.5}$. These numerical results indicate that the convergence rate depends on whether the source term satisfies the compatibility conditions associated with the Neumann Laplacian. A theoretical explanation is provided in Section \ref{sec:error-estimate}.

\begin{figure}[htbp]
\center
\subfigure[]{\includegraphics[width=0.45\linewidth]{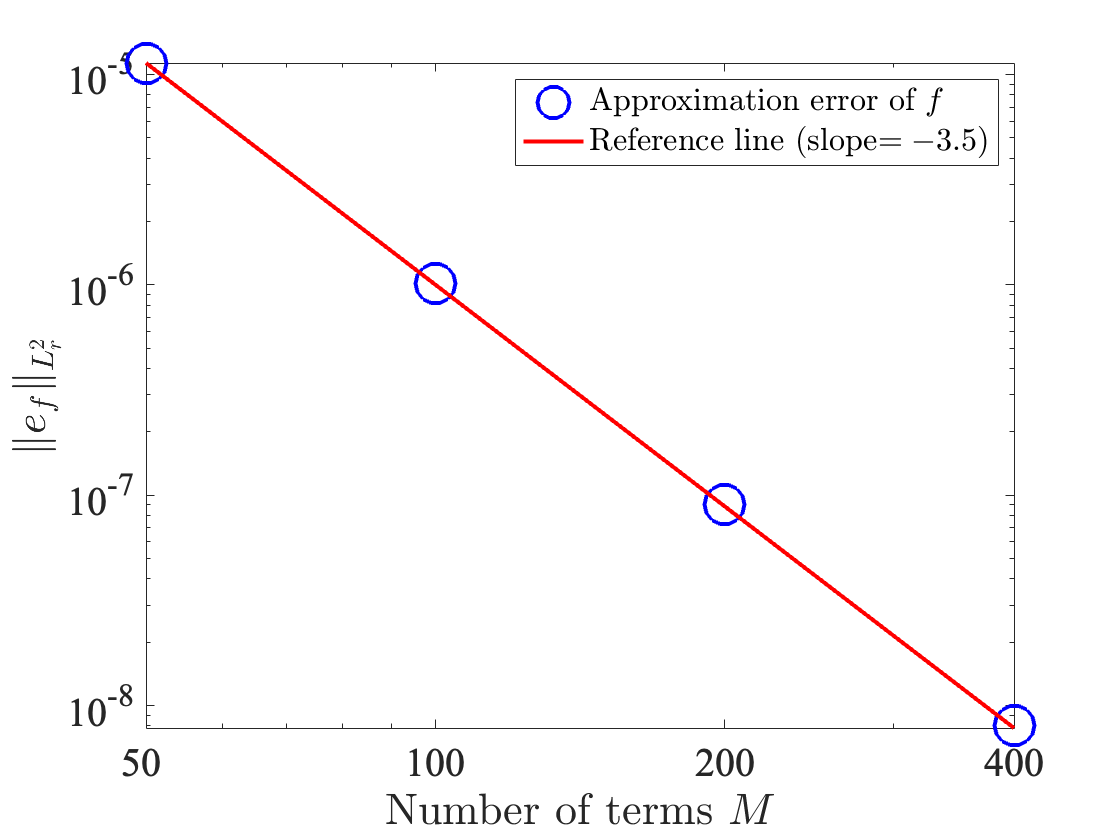}
\label{fig:Neumann_error_f}}
\subfigure[]{\includegraphics[width=0.45\textwidth]{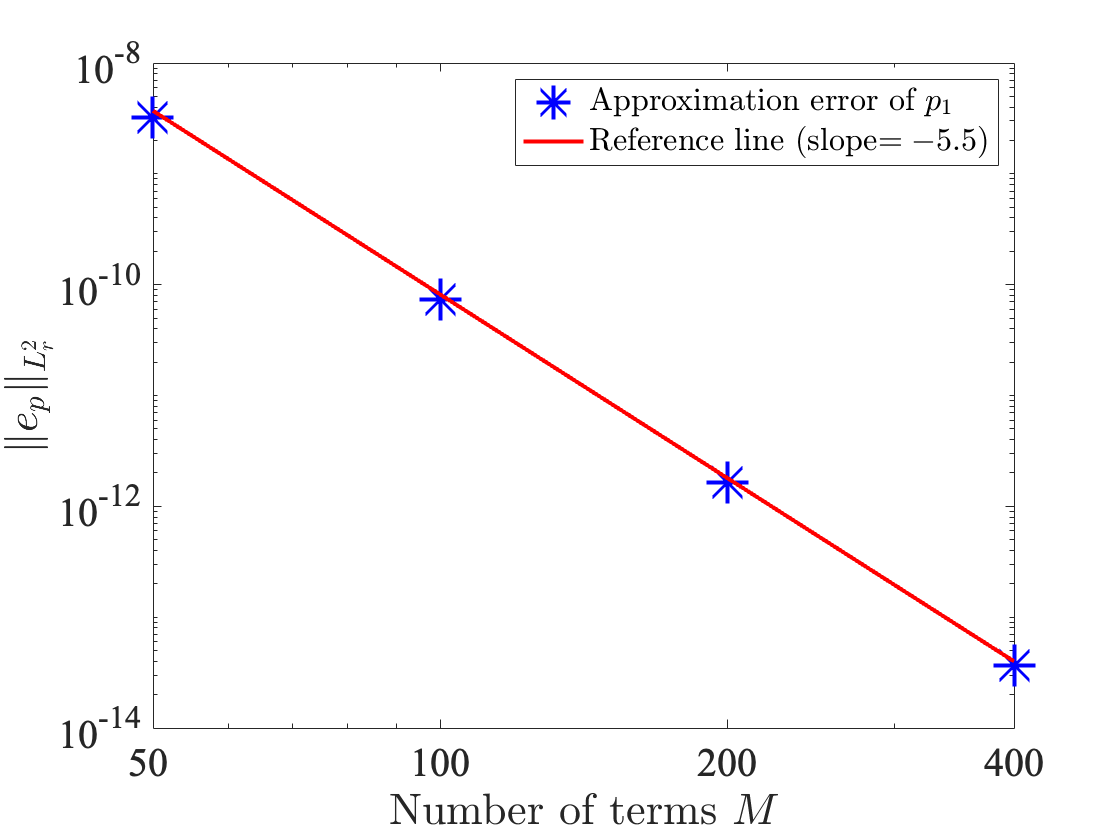}
\label{fig:Neumann_error_p1}}
\caption{Weighted $L^2$ errors for the approximation of the source function $\cos\left(\frac{2\pi r^2}{\overline{R^2}}\right)$ and the corresponding solution $p_1$.}
\label{fig:Neumann_approx_error}
\end{figure}
}

\subsection{Error analysis}\label{sec:error-estimate}
The error analysis of the geometric local parameterization  method for solving BIE with $f=0$ has already been established in our previous work \cite{zhang2026geometric}.
Here, we provide an error analysis for eigenfunction approximation. 

{
\begin{thm}\label{thm:basis-error}
Let $s\geq 2$  be an even integer. Assume that $f\in D_{s/2-1}$, where, for $s=2$,
\[D_0:=\left\{u\in H_r^2(0,\overline{R}): (-\Delta)^i u\in L_r^2(0,\overline{R}),\quad i=0,1\right\},\]
and, for every even integer $s\geq 4$,
\[D_{s/2-1}:=\left\{u \in H_r^s(0,\overline{R}):(-\Delta)^i u\in L_r^2(0,\overline{R}),\quad i=0,1,\cdots,\frac{s}{2},\quad\partial_r\left((-\Delta)^j u\right)(\overline{R})=0,\quad j=0,1,\cdots,\frac{s}{2}-2\right\}.\]
Let $f = f_0 +\overline{f}$, where $f_0$ is the mean zero component and $\overline{f}$ is a constant defined as in \eqref{eq:meanzerodecomposition}. Let $\widehat{p}_1$ be the corresponding mean-zero radial solution of
\[-\Delta \widehat{p}_1=f_0~\text{in}~(0,\overline{R}),\quad \partial_r \widehat{p}_{1}(\overline{R})=0,\quad \int_0^{R_1}\widehat{p}_1(r)rdr=0,\]
and let $\widehat{p}_{1,M}$ and $f_{0,M}$ be defined as in \eqref{eq:hatp1M} and \eqref{eq:f_0M}, respectively.
where $\{\lambda_m,\phi_m\}_{m\geq1}$ are the Neumann eigenfunctions of $-\Delta$ on the disk of radius $\overline{R}$ as defined in \eqref{eigensolutions}.
Then
\[\|f_0 -f_{0,M}\|_{L^2_r(0,\overline{R})}\leq C_1\overline{R}^s M^{-s+\frac{1}{2}},\]
and
\[\|\widehat{p}_1-\widehat{p}_{1,M}\|_{L^2_r(0,\overline{R})}\leq C_2\overline{R}^{s+2}M^{-(s+\frac{3}{2})},\]
as $M \to \infty$, where $C_1=\|(-\Delta)^{s/2}f_0\|_{L_r^2}+\left|\partial_r\left((-\Delta)^{s/2-1}f_0\right)(\overline{R})\right|$ and $C_2=\pi^2C_1$. 
\end{thm}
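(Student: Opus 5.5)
The plan is to bound the Fourier–Bessel coefficients $b_m$ by repeatedly moving the radial Laplacian onto the eigenfunctions with Green's second identity, and then to sum the squared tails using the asymptotics of $\beta_m$ and $\|\phi_m\|_{L^2_r}$. Write $\langle u,v\rangle=\int_0^{\overline{R}}uv\,r\,dr$ and $-\Delta u=-(ru')'/r$. For smooth $u,\phi$ one has
\[
\langle -\Delta u,\phi\rangle-\langle u,-\Delta\phi\rangle = -\overline{R}\,u'(\overline{R})\phi(\overline{R})+\overline{R}\,u(\overline{R})\phi'(\overline{R}),
\]
and the contribution at $r=0$ vanishes because $u,\phi$ are regular there. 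Since $\phi_m'(\overline{R})=0$, only the term $-\overline{R}u'(\overline{R})\phi_m(\overline{R})$ survives.

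First I apply this identity $s/2$ times with $u=(-\Delta)^j f_0$ for $j=0,\dots,s/2-1$. Note that $-\Delta f_0=-\Delta f$, because $\overline{f}$ is constant, so the membership $f\in D_{s/2-1}$ transfers to $f_0$. The boundary conditions $\partial_r((-\Delta)^jf_0)(\overline{R})=0$ for $j\le s/2-2$ kill every boundary term except the last one. This gives
\[
\lambda_m^{s/2}\langle f_0,\phi_m\rangle=\langle(-\Delta)^{s/2}f_0,\phi_m\rangle+\overline{R}\,\partial_r\big((-\Delta)^{s/2-1}f_0\big)(\overline{R})\,\phi_m(\overline{R}).
\]
Dividing by $\|\phi_m\|^2$ yields
\[
|b_m|\le \lambda_m^{-s/2}\Big(\frac{|\langle (-\Delta)^{s/2}f_0,\phi_m\rangle|}{\|\phi_m\|^2}+\frac{\overline{R}\,|J_0(\beta_m)|\,|\partial_r(\cdots)(\overline{R})|}{\|\phi_m\|^2}\Big).
\]

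Next I insert the known asymptotics. Since $\beta_m$ is the $m$-th positive root of $J_1$, we have $\beta_m\sim \pi m$, so $\lambda_m\ge c\,m^2/\overline{R}^2$; I would use the crude but explicit bound $\beta_m\ge (m-\tfrac14)\pi\ge \pi m/2$ or similar. For the normalization, $\|\phi_m\|^2=\tfrac{\overline{R}^2}{2}J_0(\beta_m)^2$ because $J_1(\beta_m)=0$, and $|J_0(\beta_m)|\sim\sqrt{2/(\pi\beta_m)}$. Hence the boundary term is
\[
\frac{2}{\overline{R}\,|J_0(\beta_m)|}\sim \frac{c\sqrt{m}}{\overline{R}},
\]
and it grows like $\sqrt m$. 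This growth is the source of the $+\tfrac12$ loss in the exponent. For the interior term I use Bessel's inequality,
\[
\sum_m \frac{\langle g,\phi_m\rangle^2}{\|\phi_m\|^2}\le\|g\|^2 .
\]

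Then I sum. By orthogonality,
\[
\|f_0-f_{0,M}\|^2=\sum_{m>M} b_m^2\|\phi_m\|^2 .
\]
The interior part is bounded by $\lambda_{M}^{-s}\|(-\Delta)^{s/2}f_0\|^2\lesssim \overline{R}^{2s}M^{-2s}\|(-\Delta)^{s/2}f_0\|^2$. The boundary part is bounded by
\[
\sum_{m>M}\lambda_m^{-s}\,\overline{R}^2 J_0(\beta_m)^2\,\frac{|\partial_r(\cdot)|^2}{\|\phi_m\|^2}\lesssim \sum_{m>M}\overline{R}^{2s}m^{-2s}\,|\partial_r(\cdot)|^2\lesssim \overline{R}^{2s}M^{-2s+1}|\partial_r(\cdot)|^2 .
\]
Taking square roots gives the first estimate, with the boundary part dominating at order $M^{-s+1/2}$. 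For $\widehat{p}_1$, the difference $\widehat{p}_1-\widehat{p}_{1,M}=\sum_{m>M}(b_m/\lambda_m)\phi_m$ gains an extra factor $\lambda_m^{-1}\lesssim \overline{R}^2/(\pi^2m^2)$ in each term. I should first check that $\widehat{p}_1$ has exactly the coefficients $b_m/\lambda_m$: this is Green's identity applied once with the Neumann condition on $\widehat{p}_1$, together with the mean-zero normalization. The same summation then produces $M^{-(s+3/2)}$ and the prefactor involving $\pi^{2}$.

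The main obstacle is getting the uniform, non-asymptotic constants right: explicit lower bounds for $\beta_m$ and explicit lower bounds for $|J_0(\beta_m)|\sqrt{\beta_m}$, so that the claimed $C_1$ and $C_2=\pi^2C_1$ come out. Since the theorem is stated "as $M\to\infty$", I would rely on the leading asymptotics and absorb the lower-order corrections. I would also justify the boundary term at $r=0$, using the $H^s_r$ regularity of radial functions.
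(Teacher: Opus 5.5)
Your proposal is correct and follows essentially the same route as the paper: $s/2$ integrations by parts against the Neumann eigenfunctions, leaving only the single boundary term $\overline{R}\,\partial_r\big((-\Delta)^{s/2-1}f_0\big)(\overline{R})\phi_m(\overline{R})$; then the Lommel norm $\|\phi_m\|_{L^2_r}^2=\tfrac{\overline{R}^2}{2}J_0^2(\beta_m)$ with McMahon's asymptotics; then the orthogonal tail sum $\sum_{m>M}\lambda_m^{-s}\lesssim \overline{R}^{2s}M^{-2s+1}$; and finally an extra factor $\lambda_m^{-1}$ for $\widehat{p}_1$. The only differences are small refinements: you bound the interior term by Bessel's inequality, whereas the paper uses termwise Cauchy--Schwarz, which gives $M^{-s+1/2}$ for both pieces; your version makes the interior part $O(M^{-s})$ and shows the $\tfrac12$ loss comes from the boundary term alone, whose normalized size $|\phi_m(\overline{R})|/\|\phi_m\|_{L^2_r}=\sqrt{2}/\overline{R}$ is exactly constant. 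You also explicitly verify that $\widehat{p}_1$ has coefficients $b_m/\lambda_m$, which the paper takes for granted.
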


\begin{proof}
With $b_m\|\phi_m\|_{L^2_r}^2 = \left\langle f_0,\phi_m\right\rangle_{L^2_r}$, $(-\Delta)^{s/2}\phi_m=\lambda_m^{s/2}\phi_m$, and $\phi_m'(\overline{R})=J_1(\beta_m)=0$, repeated integration by parts yields 
\[b_m\|\phi_m\|_{L^2}^2=\lambda_m^{-s/2}\langle f_0,(-\Delta)^{s/2}\phi_m\rangle_{L^2_r}=\lambda_m^{-s/2}\left(\langle(-\Delta)^{s/2} f_0,\phi_m\rangle_{L^2_r} +\overline{R}\partial_r\left((-\Delta)^{s/2-1}f_0\right)(\overline{R})\phi_m(\overline{R})\right).
\]
By the triangle inequality and the Cauchy-Schwarz inequality, we obtain
\begin{equation*}
\begin{aligned}
|b_m|&\leq\frac{\lambda_m^{-s/2}\left(\left|\langle(-\Delta)^{s/2} f_0,\phi_m\rangle_{L^2_r} \right|+\overline{R}\left|\partial_r\left((-\Delta)^{s/2-1}f_0\right)(\overline{R})\right|\left|\phi_m(\overline{R})\right|\right)}{\|\phi_m\|^2_{L_r^2}}\\
   &\leq \frac{\lambda_m^{-s/2}\left(\|(-\Delta)^{s/2}f_0\|_{L_r^2}\|\phi_m\|_{L^2_r}+\overline{R}\left|\partial_r\left((-\Delta)^{s/2-1}f_0\right)(\overline{R})\right|\left|\phi_m(\overline{R})\right|\right)}{\|\phi_m\|^2_{L_r^2}}.
\end{aligned}
\end{equation*}
From the classical result, the McMahon's asymptotic expansions (see \S10.21(vi) of \cite{olver2010nist}) for large zeros in \eqref{eigensolutions}, we have
    \[
    \beta_m = m\pi + \frac{\pi}{4} + O(m^{-1}),
    \]
    as $m \to\infty$, which implies that,
    \BEA
    \lambda_{m} \asymp \frac{m^2}{\overline{R}^2}, \label{lambdaasymptotic}
    \EEA
    for large $m$. By Lommel's integrals (see \S94 of \cite{bowman2012introduction}) and asymptotic expansions (see \S80 of \cite{bowman2012introduction}), we have $\|\phi_m\|_{L^2_r}^2=\frac{\overline{R}^2}{2}J_0^2(\beta_m)$ and $J_0(\beta_m)\asymp\sqrt{\frac{2}{\pi\beta_m}}\cos(\beta_m-\frac{\pi}{4})$, which implies
    \[\|\phi_m\|_{L^2_r}\asymp \overline{R}m^{-1/2},\quad \left|\phi_m(\overline{R})\right|=\left|J_0(\beta_m)\right|\asymp m^{-1/2}.\]
    By orthogonality, we have
\BEA
\|f_0-f_{0,M}\|_{L^2_r}=\left(\sum_{m>M} |b_m|^2\|\phi_m\|_{L^2_r}^2\right)^{\frac{1}{2}}= \left(\sum_{m>M}\lambda_m^{-s}\lambda_m^{s}|b_m|^2\|\phi_m\|_{L^2_r}^2\right)^{\frac{1}{2}}\leq C_1\left(\sum_{m>M}\lambda_m^{-s}\right)^{\frac{1}{2}}\leq C_1\overline{R}^s\left(\sum_{m>M} m^{-2s}\right)^{\frac{1}{2}},\label{eq:id1}
\EEA
where $C_1=\|(-\Delta)^{s/2}f_0\|_{L_r^2}+\left|\partial_r\left((-\Delta)^{s/2-1}f_0\right)(\overline{R})\right|$.

Using the fact that
$\sum_{m>M}m^{-2s}=\int_M^{\infty}x^{-2s}dx+O(M^{-2s})=\frac{1}{2s-1}M^{-2s+1}+O(M^{-2s})$, we obtain
\BEA
\|f_0-f_{0,M}\|_{L^2_r}\leq C_1\overline{R}^s M^{-s+\frac{1}{2}}.\label{eq:error_f0}
\EEA
 Now consider $-\Delta \widehat{p}_1=f_0$, we have by orthogonality and the equality in \eqref{eq:id1},
     \[\|\widehat{p}_1-\widehat{p}_{1,M}\|_{L^2_r}^2=\sum_{m>M}\frac{|b_m|^2}{\lambda_m^2}\|\phi_m\|_{L^2_r}^2\leq\lambda_{M+1}^{-2}\sum_{m>M}|b_m|^2 \|\phi_m\|_{L^2_r}^2= \lambda_{M+1}^{-2} \|f_0 -f_{0,M}\|_{L^2_r}^2.\]
    Using \eqref{lambdaasymptotic}, \eqref{eq:error_f0},  the proof is complete.
\end{proof}
\begin{remark}
    The convergence rates observed in the numerical examples are consistent with the error estimates established in Theorem \ref{thm:basis-error}. Specifically, for the source term $f(r)=r^2$, we have $f\in D_0$, corresponding to $s=2$. The observed convergence rate agrees with the estimate in Theorem \ref{thm:basis-error}, $\|e_f\|_{L_r^2}=O(M^{-s+1/2})=O(M^{-1.5})$. Similarly, for $f(r)=\cos\left(\frac{2\pi r^2}{\overline{R}^2}\right)$, we have $f\in D_1$, corresponding to $s=4$. Theorem \ref{thm:basis-error} predicts $\|e_f\|_{L_r^2}=O(M^{-3.5})$, which is in agreement with the observed convergence rate.
\end{remark}
}

Numerically, since we approximate $b_m$ with Gauss-Legendre with $N_r$ quadrature, we have the following quadrature error for each coefficient.

\begin{lem}\label{lem:error_bm}
Define $g_0(r)= r f_0(r)$ and $\varphi_m(r) = \sqrt{r}\phi_m(r)$, where {$f_0 \in D_{s/2-1}$} is defined as in Theorem~\ref{thm:basis-error}.
Let $b_m$, defined in \eqref{eq:f_0M}, be approximated using an order-$q$ Gauss-Legendre quadrature rule $Q_{N_r}$ with $N_r$ quadrature points,

\[
\widetilde{b}_m = \frac{Q_{N_r}(g_0\phi_m)}{Q_{N_r}(\varphi_m^2)},
\]
where $q< s-\frac{1}{2}.$ Then,
{
\[
|b_m - \widetilde{b}_m| \leq C_{s,q} \overline{R}^{-(s+3/2)} m^{(s+1)} N_r^{-q} \|f_0\|_{H^s_r},\]
where $C_{s,q}>0$ is a constant depends on $q$ and $s$, but independent of $\overline{R}, N_r$, and $m$.
}

\end{lem}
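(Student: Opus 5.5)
We write $b_m = \frac{I(g_0\phi_m)}{I(\varphi_m^2)}$, where $I(h)=\int_0^{\overline{R}}h(r)\,dr$, and split the error by adding and subtracting the exact numerator over the quadrature denominator:
\[
b_m-\widetilde b_m=\frac{I(g_0\phi_m)-Q_{N_r}(g_0\phi_m)}{Q_{N_r}(\varphi_m^2)}+I(g_0\phi_m)\,\frac{Q_{N_r}(\varphi_m^2)-I(\varphi_m^2)}{I(\varphi_m^2)\,Q_{N_r}(\varphi_m^2)}.
\]
The plan is to bound each numerator by a standard Gauss--Legendre error estimate of the form
\[
|I(h)-Q_{N_r}(h)|\le C_q\,\overline{R}^{\,q+1/2}N_r^{-q}\|h^{(q)}\|_{L^2(0,\overline{R})}.
\]
This follows from the Jackson/Peano-kernel bound on $[0,\overline{R}]$ after rescaling to $[-1,1]$. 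It requires $h\in H^q$, which is where $q<s-\tfrac12$ enters, since $q$ must not exceed the smoothness we can guarantee. Next we control derivatives of the integrands. For $h=g_0\phi_m=rf_0(r)J_0(\beta_m r/\overline{R})$, the Leibniz rule gives $\|h^{(q)}\|_{L^2}\lesssim \sum_{k\le q}\|(rf_0)^{(q-k)}\|_{L^2}\,(\beta_m/\overline{R})^{k}\|J_0^{(k)}\|_\infty$. Using $|J_0^{(k)}|\le 1$ and $\beta_m\asymp m$ (McMahon, as in the proof of Theorem~\ref{thm:basis-error}), the top-order term is $\lesssim (m/\overline{R})^q\,\overline{R}\,\|f_0\|_{H^s_r}$. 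Similarly, $(\varphi_m^2)^{(q)}=(rJ_0^2)^{(q)}$ is $O\big((m/\overline{R})^q\overline{R}\big)$ in sup norm.

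For the denominators we use $I(\varphi_m^2)=\|\phi_m\|_{L^2_r}^2=\frac{\overline{R}^2}{2}J_0^2(\beta_m)\asymp \overline{R}^2 m^{-1}$, from the Lommel integral quoted in the proof of Theorem~\ref{thm:basis-error}. Since the quadrature error in the denominator is relatively small for $N_r$ large compared to $m$, we also have $Q_{N_r}(\varphi_m^2)\ge \tfrac12 I(\varphi_m^2)$. For the second term we need $|I(g_0\phi_m)|=|b_m|\,\|\phi_m\|^2$, and here we may either use the decay bound on $|b_m|$ from Theorem~\ref{thm:basis-error} or simply Cauchy--Schwarz, $|I(g_0\phi_m)|\le \|f_0\|_{L^2_r}\|\phi_m\|_{L^2_r}$. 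Collecting factors, each term is $\lesssim m\,\overline{R}^{-2}\cdot m^q\overline{R}^{\,\cdots}N_r^{-q}\|f_0\|_{H^s_r}$. Since $q<s$, the resulting power $m^{q+1}$ is dominated by $m^{s+1}$, and we absorb it into the stated bound with a constant $C_{s,q}$. The powers of $\overline{R}$ are then assembled by a scaling argument: we change variables $r=\overline{R}\rho$ and track how $H^s_r$ norms scale.

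The main obstacle is the bookkeeping of the $\overline{R}$ powers so that they match the stated exponent $-(s+3/2)$. The difficulty is that the $H^s_r$ norm mixes derivatives of different orders, each scaling differently. We first try the rescaling to the unit interval, bound everything there with $\overline{R}$-independent constants, and then read off the worst power of $\overline{R}$, which we expect to come from the top-order $s$ derivative and the $\overline{R}^{-2}$ in the denominator. A secondary point is justifying the uniform positivity of $Q_{N_r}(\varphi_m^2)$. We do this by noting that its relative error is $O((m/N_r)^q)$, so the bound is meaningful only in the regime $m\lesssim N_r$, where it holds; outside that regime the estimate is trivially true after enlarging the constant.
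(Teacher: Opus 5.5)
Your splitting of $b_m-\widetilde b_m$ is the same as the paper's, but you estimate the numerator differently. The paper uses the sup-norm bound $|I(h)-Q_{N_r}(h)|\le C_qN_r^{-q}\|h\|_{C^q}$. It then applies the embedding $H^s\hookrightarrow C^q$ to the whole product $g_0\phi_m$, which is where $q<s-\tfrac12$ comes from. Finally it pays $(m/\overline R)^s$ for the $H^s$ norm of that product, and this is what produces $m^{s+1}$. The denominator's quadrature error is dismissed as exponentially small because $\varphi_m$ is analytic.

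You differentiate only $q$ times, so your first term is $O(m^{q+1}N_r^{-q})$. That is sharper, and you then weaken it to the stated rate. The route works, with these corrections:
\begin{itemize}
\item The $L^2$ bound with exactly $N_r^{-q}$ is a Peano-kernel statement (it needs $\|K_q\|_{L^2}=O(N_r^{-q})$ for Gauss--Legendre). Jackson's theorem on $H^q$ data loses half an order. Alternatively, keep the paper's sup-norm bound and apply the embedding to $rf_0$ alone, which still gives $m^q$.
\item With Cauchy--Schwarz, your second term is $O(m^{q+3/2}N_r^{-q})$, not $O(m^{q+1}N_r^{-q})$. The reason is that $|I(g_0\phi_m)|\lesssim m^{-1/2}$ while $I(\varphi_m^2)\,Q_{N_r}(\varphi_m^2)\gtrsim m^{-2}$, up to powers of $\overline R$. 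This term is dominated by $m^{s+1}$ exactly because $q<s-\tfrac12$, so that is where the hypothesis enters your argument; your first term only needs $q\le s$.
\item For the same reason, your bounds give a relative error of $O(m^{q+1}N_r^{-q})$ for $Q_{N_r}(\varphi_m^2)$, not $O((m/N_r)^q)$.
\end{itemize}

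The step you postpone, matching $\overline R^{-(s+3/2)}$ with an $\overline R$-independent constant, cannot be completed: the stated $\overline R$-dependence is not scale-consistent. Take $f_0(r)=F(r/\overline R)$ with $F$ fixed. The Gauss nodes and weights rescale, and the factors $\overline R^2$ cancel in both ratios, so $b_m$ and $\widetilde b_m$ do not depend on $\overline R$. But $\|f_0\|_{H^s_r}\sim\overline R\,\|F\|_{L^2_r(0,1)}$ as $\overline R\to\infty$, so the right-hand side decays like $\overline R^{-(s+1/2)}$ and the inequality fails for large $\overline R$.

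The paper reaches this exponent only through steps that hold up to $\overline R$-dependent constants:
\begin{itemize}
\item the $C^q$ quadrature bound on $[0,\overline R]$, written without its factor $\overline R^{q+1}$;
\item the interval-dependent Sobolev embedding constant;
\item the inequality $\|g_0\phi_m\|_{H^s}\le\overline R^{1/2}\|f_0\phi_m\|_{H^s_r}$.
\end{itemize}
What your argument (and the paper's) actually proves is the bound with a constant that also depends on an upper bound for $\overline R$. That is harmless here since $\overline R=3$, and you should state it that way rather than try to reproduce the exponent.

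Finally, your claim that outside the regime $N_r\gtrsim m$ the estimate holds ``trivially after enlarging the constant'' is unsupported. Controlling $\widetilde b_m$ there would need an a priori lower bound on $Q_{N_r}(\varphi_m^2)$, which you do not have. The paper simply asserts $Q_{N_r}(\varphi_m^2)\asymp\|\phi_m\|^2_{L^2_r}$; like it, you must assume $N_r$ is sufficiently large relative to $m$.
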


\begin{proof}
We first review the asymptotic rates for the Gauss quadrature rule. Specifically, for any $f\in C^q[0,\overline{R}]$ function, we have
\BEA
\left| I(f) - Q_{N_r}(f)\right| \leq C_q N_r^{-q} \|f\|_{C^q},\label{eq:Gauss_algebraic_rate}
\EEA
where $I(f) = \int_0^{\overline{R}} f(r)\,dr$ and $Q_{N_r}(f)$ denotes Gauss-Legendre approximation of the integral with $N_r$ quadrature points. If $f\in C^\infty$, then we have exponential convergence,
\[
\left| I(f) - Q_{N_r}(f)\right| \leq C\rho^{-2N_r},
\]
where $\rho>1$.

From the definitions in the lemma, 
\[
|b_m - \widetilde{b}_m| = \left|\frac{I(g_0\phi_m)}{I(\varphi^2_m)} - \frac{Q_{N_r}(g_0\phi_m)}{Q_{N_r}(\varphi_m^2)} \right| \leq \frac{1}{|Q_{N_r}(\varphi_m^2)|} 
\left(\left|I(g_0\phi_m) -Q_{N_r}(g_0\phi_m) \right| + |b_m| \left|I(\varphi^2_m)-Q_{N_r}(\varphi_m^2)\right|\right).
\]
Since $\varphi_m$ is analytic, the second term on the right hand side decays exponentially and the error is dominated by the first term. 

 Then for large $m$, $Q_{N_r}(\varphi_m^2) \asymp \|\phi_m\|_{L^2_r}^2 \asymp \frac{\overline{R}^2}{m}$. Together with \eqref{eq:Gauss_algebraic_rate}, we obtain
\BEA
|b_m - \widetilde{b}_m| \leq \frac{m}{\overline{R}^2} C_q N_r^{-q} \|g_0 \phi_m\|_{C^q}.\label{eq:bmerror}
\EEA
{With the assumption $f_0\in D_{s/2-1}$ and $D_{s/2-1}\subset H_r^s$, $f_0\in H_r^s$ and $g_0(r)=rf_0(r)\in H^s$,}  and the fact that $\phi_m \in C^\infty$, we have $g_0\phi_m \in C^{q}$, when $s>q+1/2$, by the Sobolev embedding theorem. That is,
\[
\|g_0\phi_m\|_{C^q} \leq C_s \|g_0\phi_m\|_{H^s}. % \leq C_{s} \left(\frac{\overline{R}}{m}\right)^s \|g_0\|_{H^s}\leq C_{\overline{R},s}\left(\frac{\overline{R}}{m}\right)^s \|f_0\|_{H^s}. 
\]
Since,
\[
\|g_0\phi_m\|_{L^2}^2 = \int_0^{\overline{R}} g_0^2(r)\phi_m^2(r)\,dr \leq \overline{R} \int_0^{\overline{R}} f_0^2(r)\phi_m^2(r) r\,dr = \overline{R} \|f_0 \phi_m\|_{L^2_r}^2.
\]
With similar argument, $\|g_0\phi_m\|_{H^s} \leq \overline{R}^{1/2}\|f_0\phi_m\|_{H^s_r}$. This implies that ,
{
\[
\|g_0\phi_m\|_{C^q} \leq C_s \overline{R}^{1/2}  \|f_0\phi_m\|_{H^s_r} \leq C_s\left(\frac{m}{\overline{R}}\right)^s \overline{R}^{1/2}\|f_0\|_{H^s_r}, 
\]}
where we have used $\|\partial^{\ell}_r\phi_m\|_{L^\infty}\leq \left(\frac{m}{\overline{R}}\right)^\ell$. Inserting this to \eqref{eq:bmerror}, the proof is complete.

\end{proof}

With these results, we deduce the following error estimate.

\begin{thm}\label{thm:coeff-error}
Let the assumptions in Theorem \ref{thm:basis-error} be valid. Let $\widetilde{f}_{0,M}$ be the discretization of $f_{0,M}$ defined as
\[\widetilde{f}_{0,M}=\sum_{m=1}^M \widetilde{b}_m\phi_m,\]
where $\widetilde{b}_m$ are
coefficients approximated as in Lemma~\ref{lem:error_bm} for all $m=1,\ldots, M$. Then, as $M\to \infty$,
{\BEA\|f_0-\widetilde{f}_{0,M}\|_{L^2_r}\leq C_1 \overline{R}^s M^{-s+\frac{1}{2}}+ C_{s,q} \overline{R}^{-(s+1/2)}M^{s+1}  N_r^{-q} \|f_0\|_{H^s_r},\label{thm4.2:errorf0}\EEA}
For the corresponding Poisson approximation, $\widetilde{p}_{1,M}=\sum_{m=1}^M \frac{\widetilde{b}_m}{\lambda_m}\phi_m$, we have
{
\BEA\|\widehat{p}_1-\widetilde{p}_{1,M}\|_{L^2_r}\leq C_2\overline{R}^{s+2}M^{-(s+\frac{3}{2})}+C_{s,q} \overline{R}^{-(s-3/2)}M^{s-1}  N_r^{-q} \|f_0\|_{H^s_r}.\label{thm4.2:errorp1}\EEA
}
\end{thm}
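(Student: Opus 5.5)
The plan is to split the error by the triangle inequality at the level of the truncated expansion:
\[
\|f_0-\widetilde{f}_{0,M}\|_{L^2_r}\leq \|f_0-f_{0,M}\|_{L^2_r}+\|f_{0,M}-\widetilde{f}_{0,M}\|_{L^2_r}.
\]
The first (truncation) term is bounded directly by Theorem~\ref{thm:basis-error}, giving $C_1\overline{R}^sM^{-s+1/2}$. For the second (coefficient) term we use orthogonality of $\{\phi_m\}$ in $L^2_r$:
\[
\|f_{0,M}-\widetilde{f}_{0,M}\|_{L^2_r}^2=\sum_{m=1}^M|b_m-\widetilde{b}_m|^2\|\phi_m\|_{L^2_r}^2 .
\]
Into this we insert two earlier estimates. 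Lemma~\ref{lem:error_bm} bounds each coefficient error by $C_{s,q}\overline{R}^{-(s+3/2)}m^{s+1}N_r^{-q}\|f_0\|_{H^s_r}$. The Lommel asymptotics from the proof of Theorem~\ref{thm:basis-error} give $\|\phi_m\|_{L^2_r}\asymp\overline{R}m^{-1/2}$, so each summand is $\lesssim \overline{R}^{-(2s+1)}m^{2s+1}N_r^{-2q}\|f_0\|_{H^s_r}^2$.

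The resulting sum is then estimated crudely. We have $\sum_{m\le M}m^{2s+1}\lesssim M^{2s+2}$; after taking square roots this yields $\overline{R}^{-(s+1/2)}M^{s+1}N_r^{-q}\|f_0\|_{H^s_r}$. Constants from the asymptotics and the sum are absorbed into $C_{s,q}$, and the finitely many small $m$, where the asymptotics are not yet sharp, are also absorbed there since we only claim the bound as $M\to\infty$. This proves \eqref{thm4.2:errorf0}.

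For the Poisson estimate we use the same splitting:
\[
\|\widehat{p}_1-\widetilde{p}_{1,M}\|_{L^2_r}\leq\|\widehat{p}_1-\widehat{p}_{1,M}\|_{L^2_r}+\Big(\sum_{m\le M}\lambda_m^{-2}|b_m-\widetilde{b}_m|^2\|\phi_m\|_{L^2_r}^2\Big)^{1/2}.
\]
The first term is again covered by Theorem~\ref{thm:basis-error}. In the second term, \eqref{lambdaasymptotic} gives $\lambda_m^{-2}\asymp\overline{R}^4m^{-4}$, so each summand becomes $\lesssim\overline{R}^{-(2s-3)}m^{2s-3}N_r^{-2q}\|f_0\|_{H^s_r}^2$. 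Summing gives $\lesssim M^{2s-2}$, and the square root gives $\overline{R}^{-(s-3/2)}M^{s-1}N_r^{-q}\|f_0\|_{H^s_r}$, which is \eqref{thm4.2:errorp1}.

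The main obstacle is controlling the low-$m$ terms uniformly. The asymptotic relations $\lambda_m\asymp m^2/\overline{R}^2$ and $\|\phi_m\|_{L^2_r}\asymp\overline{R}m^{-1/2}$ hold only up to constants, and one should check that $\lambda_1=(\beta_1/\overline{R})^2$ is bounded below by a fixed multiple of $\overline{R}^{-2}$. Since $\beta_1>0$ is a fixed number, two-sided bounds $c\,m\le\beta_m\le C\,m$ for all $m\ge1$ hold with absolute constants, so the powers of $m$ and $\overline{R}$ are correct globally, not just asymptotically. A second minor point is that Lemma~\ref{lem:error_bm} is applied for all $m\le M$, so its constant must be uniform in $m$, which it is by its statement.
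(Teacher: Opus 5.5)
Your proposal is correct and follows the paper's proof step for step: the triangle inequality with Theorem~\ref{thm:basis-error} for the truncation terms, then orthogonality combined with Lemma~\ref{lem:error_bm}, $\|\phi_m\|_{L^2_r}^2\asymp\overline{R}^2/m$ and $\lambda_m\asymp m^2/\overline{R}^2$, and the power sums $\sum_{m\le M}m^{2s+1}\asymp M^{2s+2}$ and $\sum_{m\le M}m^{2s-3}\asymp M^{2s-2}$. Your extra remark that these bounds hold uniformly for small $m$ (e.g.\ via $c\,m\le\beta_m\le C\,m$ for all $m\ge1$) slightly tightens the paper's purely asymptotic bookkeeping without changing the route.
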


\begin{proof}
    By the triangle inequality,
\[
\|f_0-\widetilde{f}_{0,M}\|_{L^2_r}
\le \|f_0-f_{0,M}\|_{L^2_r}+\|f_{0,M}-\widetilde{f}_{0,M}\|_{L^2_r}.
\]
The first term is bounded by Theorem~\ref{thm:basis-error}. For the second term,
\[f_{0,M}-\widetilde{f}_{0,M}=\sum_{m=1}^M(b_m-\widetilde{b}_m)\phi_m.\]
Using orthogonality and Lemma~\ref{lem:error_bm},
{
\[\|f_{0,M}-\widetilde{f}_{0,M}\|_{L^2_r}^2=\sum_{m=1}^M |b_m-\widetilde{b}_m|^2 \|\phi_m\|_{L^2_r}^2 \leq C_{s,q}^2 \overline{R}^{-2(s+3/2)}N_r^{-2q} \|f_0\|^2_{H_r^s}\left(\sum_{m=1}^M m^{2(s+1)}\|\phi_m\|_{L^2_r}^2\right).\]}
Since $\|\phi_m\|_{L^2_r}^2 \asymp \frac{\overline{R}^2}{m}$, we have
{
\[\|f_{0,M}-\widetilde{f}_{0,M}\|_{L_r^2}\leq 
C_{s,q} \overline{R}^{-(s+1/2)}M^{s+1}  N_r^{-q} \|f_0\|_{H^s_r}, \]
where we use the fact that $\sum_{m=1}^M m^{2s+1}=\frac{M^{2s+2}}{2s+2}+O(M^{2s+1})$.}

Combining with the basis error in Theorem \ref{thm:basis-error}, we obtain \eqref{thm4.2:errorf0}.
For the corresponding solution $\widehat{p}_1$, using the scalings of $\lambda_m$ in \eqref{lambdaasymptotic} and the norm of the eigenbasis $\phi_m$,
we have
{
\[\|\widehat{p}_{1,M}-\widetilde{p}_{1,M}\|_{L^2_r}^2=\sum_{m=1}^M \frac{|b_m-\widetilde{b}_m|^2}{\lambda_m^2} \|\phi_m\|_{L^2_r}^2 \leq C^2_{s,q} \overline{R}^{-2(s+3/2)}  N_r^{-2q} \|f_0\|^2_{H^s_r}\left(\sum_{m=1}^M m^{2(s+1)} \frac{\|\phi_m\|_{L^2_r}^2}{\lambda_m^2} \right) .\]}
Using the scalings of $\lambda_m$ in \eqref{lambdaasymptotic} and the norm of the eigenbasis $\phi_m$, we have,
{
\[
\|\widehat{p}_{1,M}-\widetilde{p}_{1,M}\|_{L^2_r} \leq  C_{s,q} \overline{R}^{-(s-3/2)}M^{s-1}  N_r^{-q} \|f_0\|_{H^s_r},
\]
where we use the fact that $\sum_{m=1}^M m^{2s-3}=\frac{M^{2s-2}}{2s-2}
+O(M^{2s-3})
$.}

Combining with the basis error in Theorem \ref{thm:basis-error}, we obtain \eqref{thm4.2:errorp1}.
\end{proof}

\begin{remark}
Although this result is tailored to the Gauss-Legendre method that we used in our numerical implementation, the result can easily be modified for arbitrary quadrature rules. Additionally, we have several notes:
\begin{enumerate}
\item The result in Theorem 4.2 shows the projection error rate (first term) and the coefficient approximation error (second term). In our numerical experiments, we use a large number of quadrature points, $N_r$, to suppress the coefficient approximation error. 
The results in Figure~\ref{fig:r2_approx_error} correspond to the projection error rate, where one can see that the error in approximating $\widehat{p}_1$ decays $M^2$ faster than that of $f_0$.  
\item We should point out that while we focus on expansion of radial functions, one can also consider the Bessel expansion for nonradial functions. In fact, if the domain is arbitrary, the natural basis is to use eigenfunctions of the Laplace associated with the domain under the prescribed boundary conditions. {On a disk, these eigenfunctions are given by the Fourier-Bessel eigenfunctions, which provide a natural basis for approximating nonradial source terms. In Section \ref{sec:applications}, we present a numerical example based on the Fourier-Bessel expansion to illustrate the applicability of the proposed framework to nonradial source terms.}
\item In Theorem~\ref{thm:coeff-error}, the solution $\widehat{p}_{1,M}$ is defined with the exact $\lambda_m= \beta_m^2/\overline{R}^2$, where $\beta_m$ are zeros of the Bessel function $J_1$. Practically, we approximate these roots with the Newton scheme. While we believe that the error induced by this approximation is smaller than the one reported in Theorem~\ref{thm:coeff-error}, one can always add this error rate for completeness. 
\end{enumerate}
\end{remark}

\section{Applications and extensions}\label{sec:applications}
In this section, we demonstrate the applicability of the proposed method to a free-boundary tumor growth model which describes the interactions among tumor cells. We then demonstrate the extension of the proposed method to free boundary problems with more general source terms and present a numerical example with a nonradial source term.

\subsection{Application to tumor growth}

Let $\overline{\Omega}\subset\mathbb{R}^2$ denote a vascularized tissue region and let $\Omega(t)\subset\overline{\Omega}$ denote the tumor domain. We assume that tumor cells occupy the evolving domain $\Omega(t)$ and that their local proliferation rate is proportional to the nutrient concentration \cite{friedman2006bifurcation,chen2014tumor}. Since nutrients are supplied through the surrounding vascular network formed by angiogenesis, the nutrient concentration is modeled on the entire tissue domain $\overline{\Omega}$ rather than only within the tumor \cite{kerbel2008tumor,holash1999new}. Assuming that nutrient diffusion occurs on a much faster time scale than tumor growth, the nutrient concentration is governed by the quasi-steady reaction--diffusion equation
\begin{equation}\label{eqn:nutrient}
\left\{
\begin{array}{rcll}
-\Delta c &=& -c & \text{in }\, \overline{\Omega}, \\
c &=& c_0 & \text{on }\, \overline{\Gamma},
\end{array}
\right.
\end{equation}
where $c_0$ denotes the constant nutrient concentration supplied at the boundary. Cell proliferation induces a velocity field $\mathbf V$ describing the motion of tumor cells, which is assumed to obey Darcy's law \cite{byrne1995growth,greenspan1972models,greenspan1976growth,friedman2003hierarchy}. Under the assumption of constant cell density, conservation of mass together with Darcy's law yields
\begin{equation}\label{eqn:tumor}
\left\{
\begin{array}{rcll}
-\Delta p &=& \eta(c-\tilde{c}) & \text{in }\, \Omega(t), \\
p &=& \kappa & \text{on }\, \Gamma(t), \\
\frac{\partial p}{\partial \mathbf n} &=& -V_n & \text{on }\, \Gamma(t),
\end{array}
\right.
\end{equation}
where $p$ is the pressure of the fluid-like cells in $\Omega(t)$, $\eta$ is a positive parameter, and $\tilde{c}$ denotes the threshold nutrient concentration. 
In many continuum tumor growth models, the nutrient concentration is defined on the evolving tumor domain $\Omega(t)$ and is coupled with the free boundary through the moving interface \cite{lu2023bifurcation,lowengrub2010nonlinear}. Consequently, the nutrient field must be recomputed as the tumor boundary evolves, leading to a fully coupled moving-boundary problem. In this paper, our primary objective is to develop and validate an accurate numerical method for the generalized Hele--Shaw free boundary problem arising from the pressure equation in tumor growth, rather than for the coupled nutrient--tumor system. To this end, we prescribe the nutrient field by solving \eqref{eqn:nutrient} on a larger fixed tissue domain $\overline{\Omega}$, which is motivated by nutrient supply from the surrounding vascular network. The resulting nutrient distribution provides a spatially varying source term in the pressure equation \eqref{eqn:tumor}, allowing us to isolate the numerical treatment of the free boundary evolution while retaining a biologically meaningful forcing.

We consider the nutrient equation \eqref{eqn:nutrient} posed on a disk $\overline{\Omega}$ of radius $\overline{R}$. Due to the radial symmetry of $\overline{\Omega}$, the nutrient concentration admits the analytical solution
\[c(r)=c_0\frac{I_0(r)}{I_0(\overline{R})},\]
where $I_0$ denotes the modified Bessel function of the first kind of order zero. The corresponding source term in the free boundary equation \eqref{eqn:tumor} is then obtained from this nutrient distribution, $-\Delta p = \eta\left(c(r)-\tilde{c}\right)$. By decomposition, $p=p_1+p_2$, we reformulate \eqref{eqn:tumor} as
\begin{equation}\label{eqn:tumor_decompositon}
\begin{aligned}
-\Delta p_1=\eta\left(c(r)-\tilde{c}\right) \quad\text{on}~\overline{\Omega},
\qquad \text{and} \qquad
\begin{cases}
\begin{array}{rcll}
    -\Delta p_2 &=& 0,&\text{on}~\Omega(t),\\
p_2 & =& \kappa - \left.p_1\right|_{\Gamma},&\text{on}~\Gamma(t),\\
\frac{\partial p_2}{\partial \mathbf n}&=&-V_n-\left.\frac{\partial p_1}{\partial \mathbf n}\right|_{\Gamma},&\text{on}~\Gamma(t).
\end{array}
\end{cases}
\end{aligned}
\end{equation}

Combining with \eqref{eqn:nutrient}, we have $-\Delta (p_1+\eta c)=-\eta\tilde{c}$ which implies that $p_1(r) = -\eta c_0\frac{I_0(r)}{I_0(\overline{R})}+\frac{\eta\tilde{c}}{4}r^2$ is a particular solution. For the numerical experiments, we set $\overline{R}=3$, $c_0=1$, $\eta=10$, and $\tilde{c}=0.1$.

\begin{remark}
    Here we consider the nutrient equation \eqref{eqn:nutrient} on a fixed disk $\overline{\Omega}$ of radius $\overline{R}$. Due to the radial symmetry of $\overline{\Omega}$, the nutrient concentration admits the analytical solution, which allows the source term in the tumor model to be evaluated directly. This assumption is made solely for the purpose of validating the proposed framework.

    For a general tissue domain, an analytical solution is generally unavailable. However, the nutrient equation can be reformulated as a boundary integral equation using the Green's function denoted as $\widehat{G}(\mathbf x,\mathbf y)$ associated with the modified Helmholtz operator $(-\Delta+1)$. Specifically, \( \widehat{G}(\mathbf{x}, \mathbf{y}) =\frac{1}{2\pi} K_0\left(\|\mathbf{x} - \mathbf{y}\|\right) \) for two-dimensional cases, where $K_0$ is the modified Bessel function of the second kind of order zero. Applying the Green's third identity yields
\begin{equation}\label{eqn:BIE_nutrient}
       c(\mathbf x)=\int_{\overline{\Gamma}}\left(\widehat{G}(\mathbf x,\mathbf y)\frac{\partial c(\mathbf y)}{\partial \mathbf n(\mathbf y)}-c(\mathbf y)\frac{\partial\widehat{G}(\mathbf x,\mathbf y)}{\partial\mathbf n(\mathbf y)}\right)dS_{\mathbf y}, \quad \forall \mathbf x\in\overline{\Omega}
\end{equation}
Incorporating the jump condition and the boundary condition $c=c_0$ on $\overline{\Gamma}$, we have
\[\int_{\overline{\Gamma}}\widehat{G}(\mathbf x,\mathbf y)\frac{\partial c(\mathbf y)}{\partial \mathbf n(\mathbf y)}dS_{\mathbf y}=-\frac{c_0}{2}-\int_{\overline{\Gamma}}c_0\frac{\partial\widehat{G}(\mathbf x,\mathbf y)}{\partial\mathbf n(\mathbf y)}dS_{\mathbf y},\quad \mathbf x,\mathbf y\in\overline{\Gamma}\]
from which the unknown normal derivative $\left.\frac{\partial c}{\partial \mathbf n}\right|_{\overline{\Gamma}}$ can be determined. The nutrient concentration $c$ is then recovered from the boundary integral equation \eqref{eqn:BIE_nutrient} and the boundary condition. Therefore, the proposed framework naturally extends to general tissue domain without requiring volumetric discretization.
\end{remark}

\paragraph{Radial source term on a disk.} In this example, we consider the tumor region $\Omega(t)$ to be a disk whose initial boundary is a circle with radius 2,
\begin{equation}\label{eqn:tumor_IC1}
\mathbf x(0)=\Big(x_1(0),x_2(0)\Big)=\Big(2\cos(\theta),2\sin(\theta)\Big),\quad \theta\in[0,2\pi].
\end{equation}
For the circular case, the general solution of $-\Delta p_2=0$ is $p_2(r) =\kappa-\left.p_1\right|_{\Gamma}$ which implies that $V_n+\left.\frac{\partial p_1}{\partial \mathbf n}\right|_{\Gamma}=0$. Therefore, $\frac{dR(t)}{dt}=V_n=-\left.\frac{\partial p_1}{\partial\mathbf n}\right|_{\Gamma}=\eta c_0\frac{I_1(R(t))}{I_0(\overline{R})}-\frac{\eta\tilde{c}}{2}R(t)$, where $I_1$ denotes the modified Bessel function of the first kind of order one, and $R(t)$ denotes the radius of the evolving domain $\Omega(t)$. Numerically, we first approximate $f_0=f(r)-\overline{f}$ by the truncated eigenfunction expansion, where $f(r) = \eta\left(c(r)-\tilde{c}\right)$, and $\overline{f}=\frac{2}{\overline{R}^2}\int_0^{\overline{R}} f(r)rdr=\eta c_0\frac{2 I_1(\overline{R})}{\overline{R}I_0(\overline{R})}-\tilde{c}$ is the weighted radial mean. 
We employ Gauss–Legendre quadrature with $N_r=2000$ points and $M=200$ eigenfunctions. Equation \eqref{eqn:tumor_decompositon} is then solved using the proposed geometric local parameterization method with $N=400$, $\Delta t=10^{-5}$, and forward Euler time stepping. The resulting evolution of $\Omega(t)$ is shown in Figure \ref{fig:radial_disk-dynamics}, where snapshots at $t=0$, $0.05$, and $0.2$ are displayed. For validation, the ODE for $R(t)$ is solved using MATLAB's \texttt{ode45} solver as a reference solution. Figure \ref{fig:radial_disk-error} shows the absolute error in the radius over time.\begin{figure}[htbp]
\center
\subfigure[Interface evolution for a radial source term on a disk.]{\includegraphics[width=0.45\textwidth]{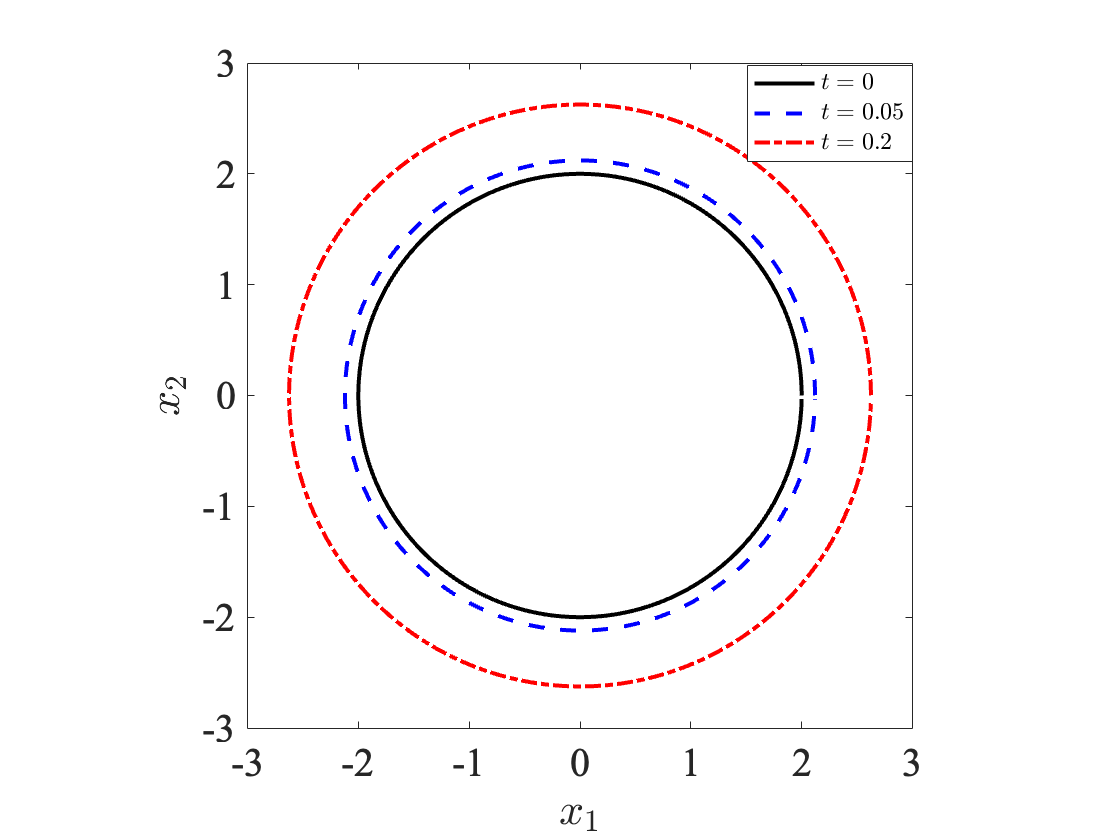}\label{fig:radial_disk-dynamics}}
\subfigure[Radius error over time.]{\includegraphics[width=0.45\textwidth]{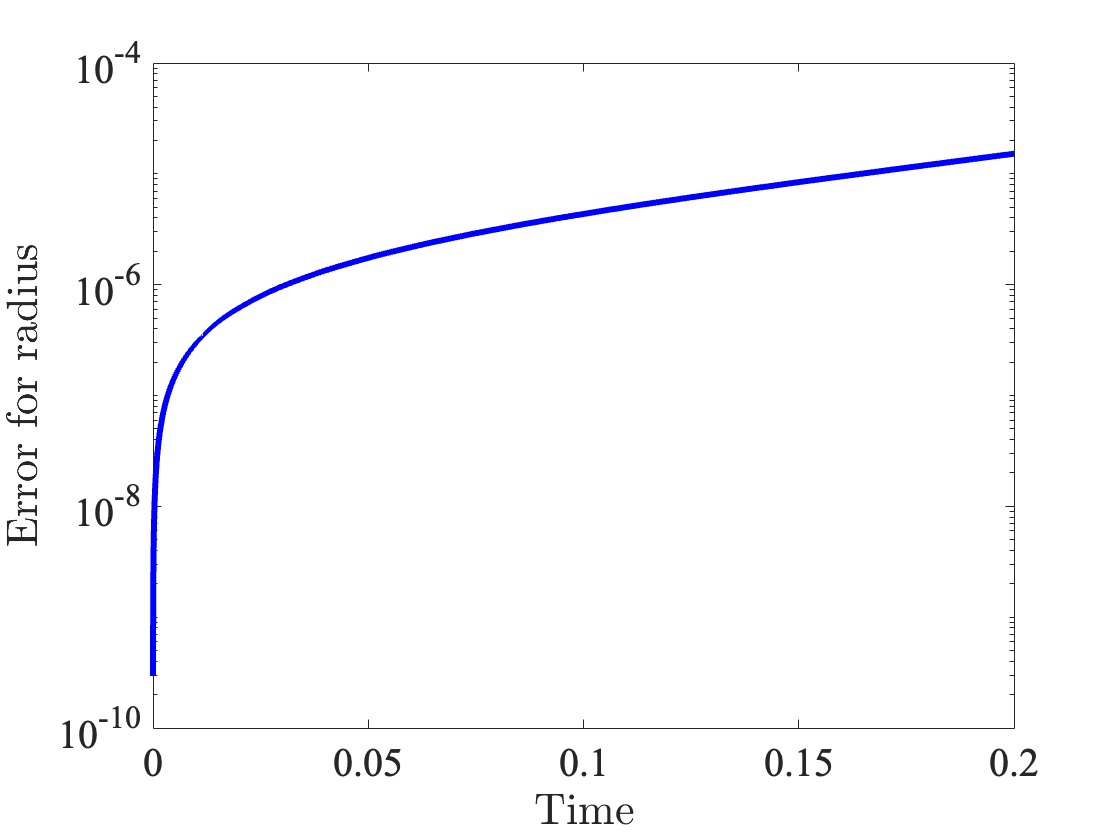}\label{fig:radial_disk-error}}
\caption{The evolution dynamics for a radial source term on a disk with initial radius 2. In (a),  the profiles of $\mathbf x(t)$ at $t=0$, $0.05$, $0.2$. In (b), The error for radius $R(t)$ is plotted at a time interval $[0,0.2]$. }
\label{fig:radial_disk}
\end{figure}

\paragraph{Radial source term on a perturbed disk.} Next, we consider a tumor region $\Omega(t)$ whose initial boundary is a perturbed disk given by,
\begin{equation}\label{eqn:perturbed-circle}
\begin{aligned}
 \mathbf x(0)&=\Big(x_1(0),x_2(0)\Big)=\Big(r(\theta)\cos(\theta),r(\theta)\sin(\theta)\Big),\quad   r(\theta) &=1+D_1\cos(D_2\theta), \theta\in[0,2\pi],
\end{aligned}
\end{equation}
where $D_1$ and $D_2$ are parameters. We set $D_1=0.1$ and $D_2=5$, while all other numerical parameters are kept the same as in the previous example.
As shown in Figure \ref{fig:radial_bessel}, the interface initially relaxes toward a circular shape due to surface tension effects. At later times, the forcing induced by the source term dominates, leading to overall tumor growth.

\begin{figure}[htbp]
\center
\subfigure[Interface evolution for a radial source term on a perturbed disk with $D_1=0.1$, $D_2=5$.]{\includegraphics[width=0.45\textwidth]{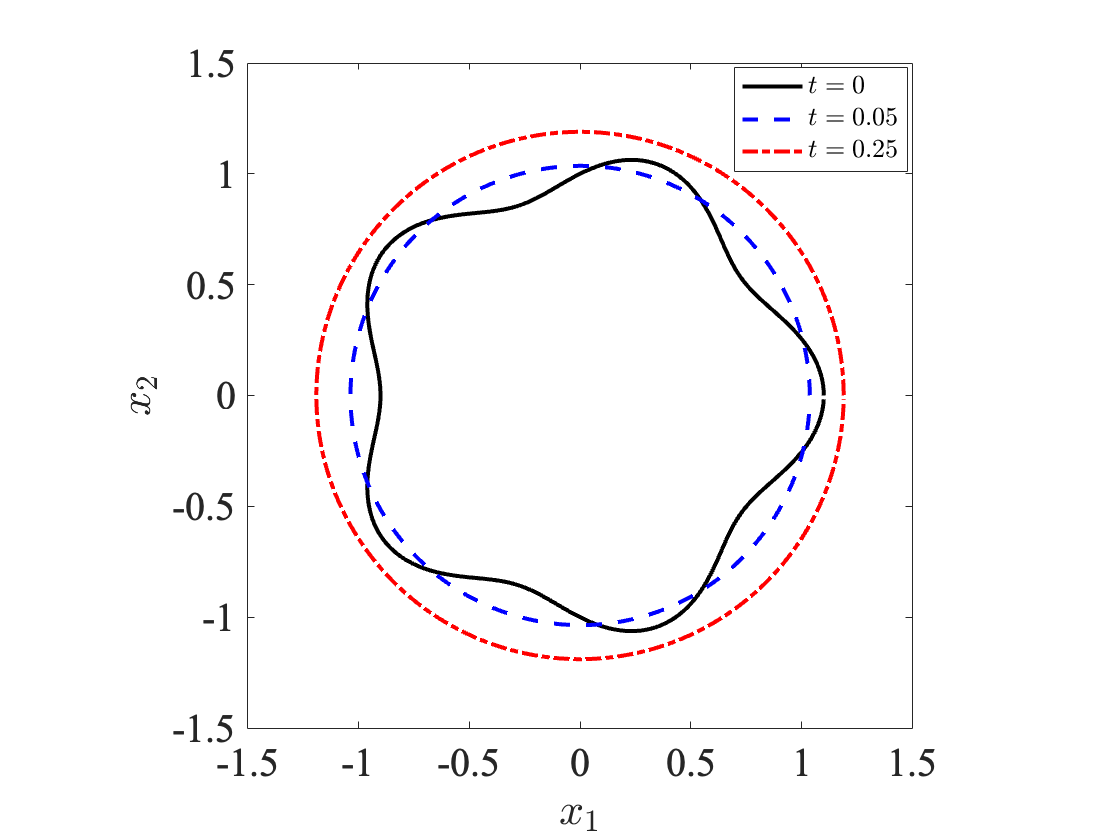}\label{fig:radial_bessel_dynamics}}
\subfigure[Evolution of then maximum and minimum distances from the boundary points to the center.]{\includegraphics[width=0.45\textwidth]{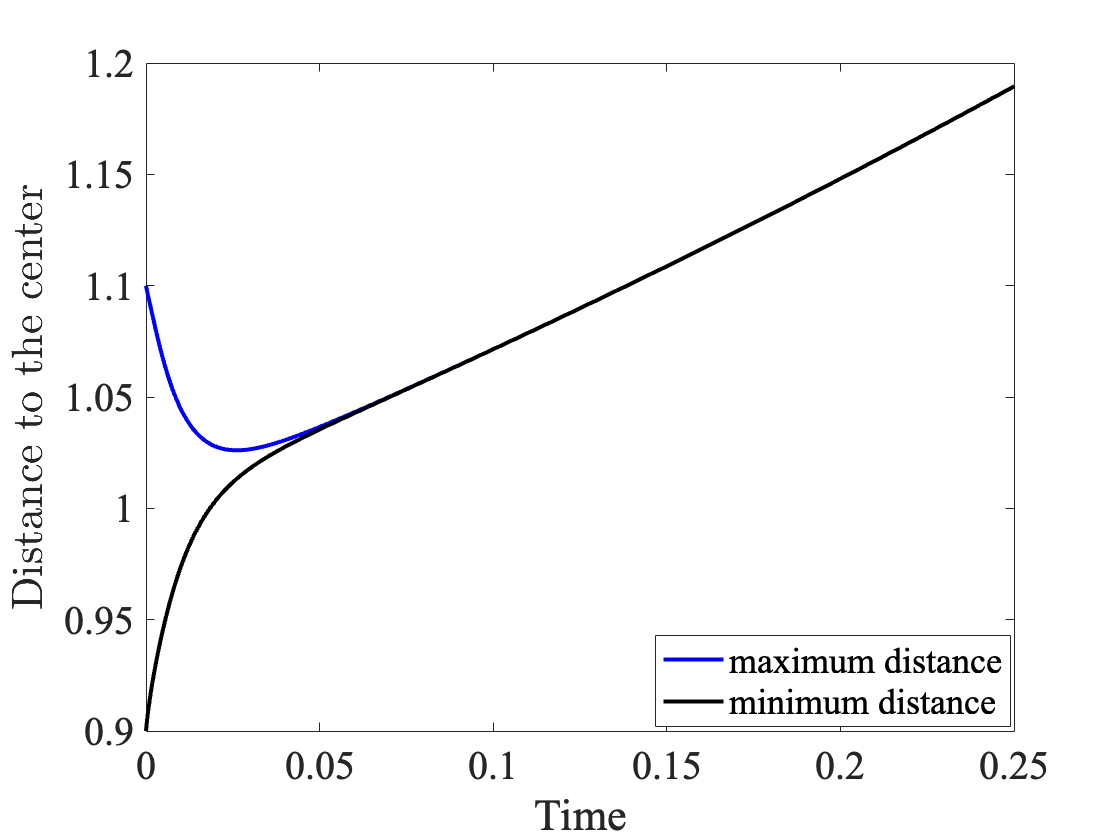}\label{radial_bessel_radius}}
\caption{The evolution dynamics for a radial source term on a perturbed circle with $D_1=0.1$ and $D_2=5$. In (a),  the profiles of $\mathbf x(t)$ at $t=0$, $0.05$, $0.25$ are shown. In (b), The maximum and minimum distances from the boundary points to the center are shown. }
\label{fig:radial_bessel}
\end{figure}

\subsection{Extension to nonradial source term} The numerical examples presented in the previous sections focus on radial source terms, for which only the radial Bessel eigenfunctions are required. We now demonstrate that the proposed method can be naturally extended to free boundary problems with nonradial source terms. The eigenfunction-based approximation is then constructed using the full Fourier-Bessel basis associated with the homogeneous Neumann eigenvalue problem for the negative Laplacian on the disk $\overline{\Omega}$ of radius $\overline{R}$. The eigenfunctions are given by
\[\psi_{1,nm}(r,\theta)=\cos(n\theta)J_n\left(\frac{\beta_{nm}r}{\overline{R}}\right),\quad \psi_{2,nm}(r,\theta)=\sin(n\theta)J_n\left(\frac{\beta_{nm}r}{\overline{R}}\right),\]
where $J_n$ is the Bessel function of the first kind of order $n$, and $\beta_{nm}$ is the $m$-th positive zero of $J_n'$. {The positive zeros $\beta_{nm}$ are computed numerically by first locating intervals containing successive roots through sign changes of $J_n'$, followed by refinement using MATLAB's \texttt{fzero} routine.}

For a general source function $f(r,\theta)$, as in Section \ref{subsec:eigenfunction}, we decompose $f=f_0+\overline{f}$, where $\overline{f} =\frac{1}{\pi\overline{R}^2} \int_0^{2\pi}\int_0^{\overline{R}}f(r,\theta)rdrd\theta$, so that $f_0$ belongs to the mean-zero subspace spanned by the Neumann eigenfunctions. We then approximate $f_0$ by the truncated eigenfunction expansion
\[f_0(r,\theta)\approx f_{0,N_{\theta},M}(r,\theta)=\sum_{n=0}^{N_\theta}\sum_{m=1}^M \left(A_{nm}\cos(n\theta)+B_{nm}\sin(n\theta)\right)J_n\left(\frac{\beta_{nm}r}{\overline{R}}\right),\]
where $N_\theta$ is the maximal number of angular modes, and $M$ is the maximal number of radial modes. The coefficients are given by the corresponding weighted $L^2$ projections. For $n\geq 1$,
\[A_{nm}=\frac{\int_0^{2\pi}\int_0^{\overline{R}} f_0(r,\theta)J_n\left(\frac{\beta_{nm}r}{\overline{R}}\right)\cos(n\theta)rdrd\theta}{\int_0^{2\pi}\int_0^{\overline{R}} \left(J_n\left(\frac{\beta_{nm}r}{\overline{R}}\right)\cos(n\theta)\right)^2rdrd\theta},\quad B_{nm}=\frac{\int_0^{2\pi}\int_0^{\overline{R}} f_0(r,\theta)J_n\left(\frac{\beta_{nm}r}{\overline{R}}\right)\sin(n\theta)rdrd\theta}{\int_0^{2\pi}\int_0^{\overline{R} }\left(J_n\left(\frac{\beta_{nm}r}{\overline{R}}\right)\sin(n\theta)\right)^2rdrd\theta}.\]
Then the approximation of the corresponding solution $\widehat{p}_1$ follows directly from the eigenvalue relation
\[\widehat{p}_1\approx\widehat{p}_{1,N_{\theta},M}=\sum_{n=0}^{N_\theta}\sum_{m=1}^M \left(\frac{A_{nm}}{\lambda_{nm}}\cos(n\theta)+\frac{B_{nm}}{\lambda_{nm}}\sin(n\theta)\right)J_n\left(\frac{\beta_{nm}r}{\overline{R}}\right),\quad \lambda_{nm}=\left(\frac{\beta_{nm}}{\overline{R}}\right)^2.\]
We illustrate this extension with the following numerical example, \[f(r,\theta)=r^2e^{\frac{r\cos(\theta)}{2\overline{R}}},\quad r\in[0,\overline{R}],~\theta\in[0,2\pi].\] 
We set $\overline{R}=3$ and apply the proposed eigenfunction approximation using Gauss-Legendre quadrature with $N_r=2000$ quadrature points in the radial direction and $N_a=256$ uniformly distributed points in the angular direction. We first investigate the convergence of the proposed approximation with respect to the number of radial modes. The number of angular modes is fixed at $N_{\theta}=10$, while the number of radial modes is taken as $M=100,200,400$, and $800$. As shown in Figure \ref{fig:radial_convergence}, the approximation error $\|e_f\|_{L^2}=\left(\int_0^{2\pi}\int_0^{\overline{R}} \left(f(r,\theta)-f_M(r,\theta)\right)^2rdrd\theta\right)^{\frac{1}{2}}$ decays approximately as $M^{-1.5}$. Next, we examine the convergence with respect to the number of angular modes. The number of radial modes is fixed at $M=1600$, while the number of angular modes is varied as $N_{\theta}=1,2,3$, and $4$. Figure \ref{fig:angular_convergence} demonstrates exponential convergence with increasing $N_{\theta}$.

\begin{figure}[htbp]
\center
\subfigure[]{\includegraphics[width=0.45\textwidth]{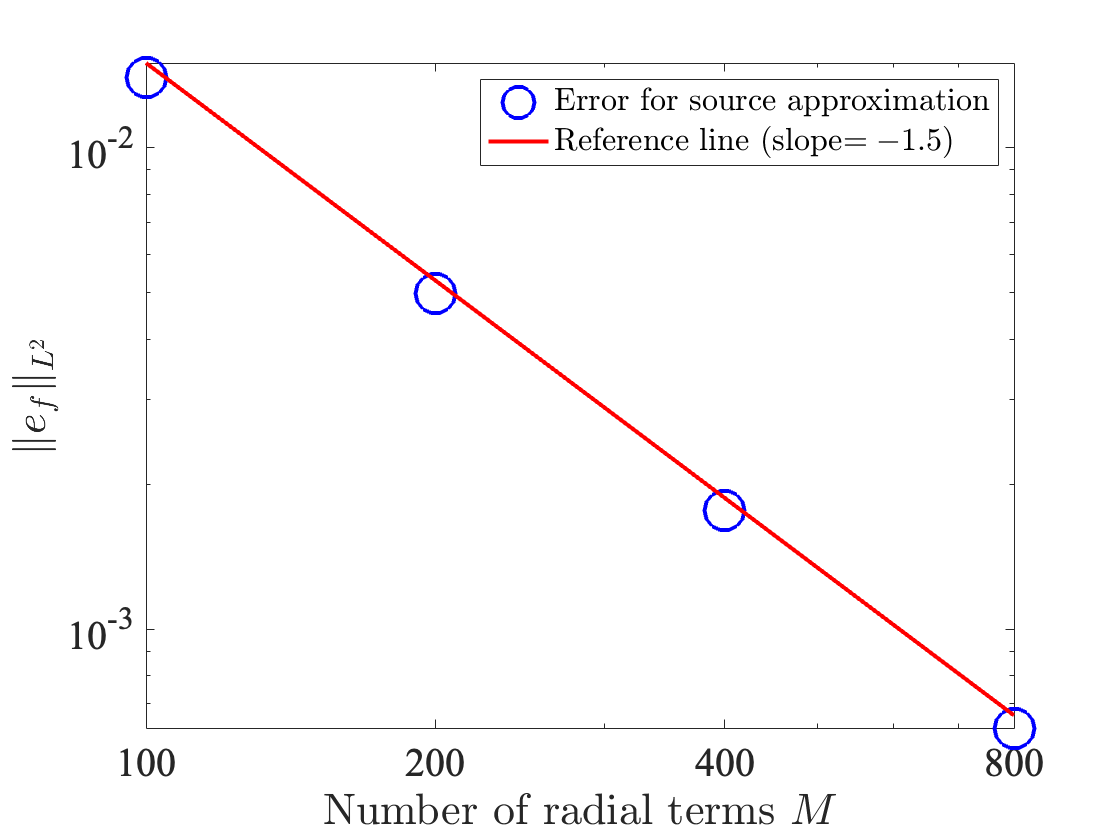}
\label{fig:radial_convergence}}
\subfigure[]{\includegraphics[width=0.45\linewidth]{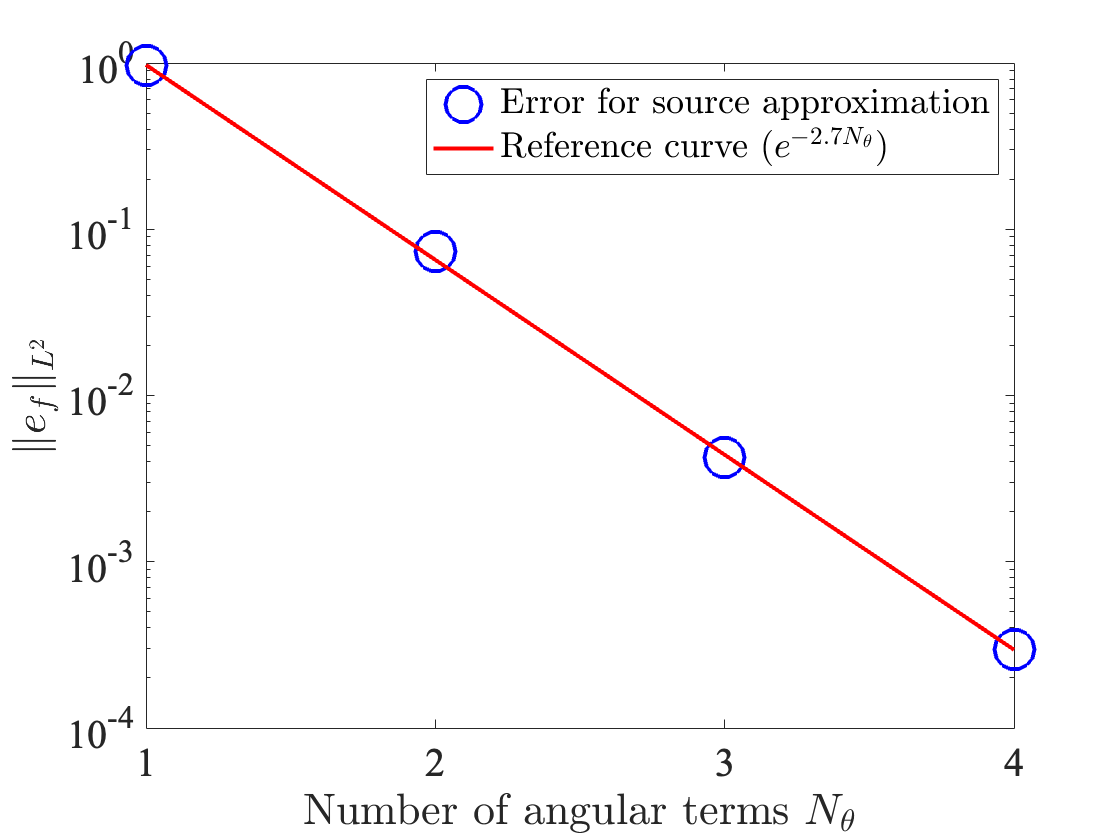}
\label{fig:angular_convergence}}
\caption{Weighted $L^2$ errors for the approximation of the source function $f$.}
\label{fig:nonradial_approx_error}
\end{figure}

Having verified the convergence of the proposed eigenfunction approximation, we next solve the free boundary problem with the nonradial source term,
\begin{equation}\label{eqn:nonradial}
\begin{aligned}
-\Delta p_1=f(r,\theta) \quad\text{on}~\overline{\Omega},
\qquad \text{and} \qquad
\begin{cases}
\begin{array}{rcll}
    -\Delta p_2 &=& 0,&\text{on}~\Omega(t),\\
p_2 & =& \kappa - \left.p_1\right|_{\Gamma},&\text{on}~\Gamma(t),\\
\frac{\partial p_2}{\partial \mathbf n}&=&-V_n-\left.\frac{\partial p_1}{\partial \mathbf n}\right|_{\Gamma},&\text{on}~\Gamma(t),
\end{array}
\end{cases}
\end{aligned}
\end{equation}
where the initial boundary of $\Omega(t)$ is a perturbed disk given by,
\begin{equation}\label{nonradial_IC}
\begin{aligned}
    r(\theta) &=2+D_1\cos(D_2\theta),\\
\mathbf x(0)&=\Big(x_1(0),x_2(0)\Big)=\Big(r(\theta)\cos(\theta),r(\theta)\sin(\theta)\Big),\quad \theta\in[0,2\pi],
\end{aligned}
\end{equation}
where $D_1$ and $D_2$ are parameters with values $D_1=0.1$ and $D_2=5$. We apply the proposed eigenfunction approximation using $M=600$ radial terms and $N_{\theta}=6$ angular terms. \eqref{eqn:nonradial} is then solved by the proposed geometric local parameterization method with $N=400$, $\Delta t=10^{-5}$, and the Forward Euler method. The evolution dynamics of $\Omega(t)$ is shown in Figure \ref{fig:nonradial_dynamics}, while Figure \ref{fig:nonradial_radius} depicts the evolution of then maximum and minimum distances from the boundary points to the center of mass.
\begin{figure}[htbp]
\center
\subfigure[Interface evolution for a nonradial source term on a perturbed disk with $D_1=0.1$, $D_2=5$.]{\includegraphics[width=0.45\textwidth]{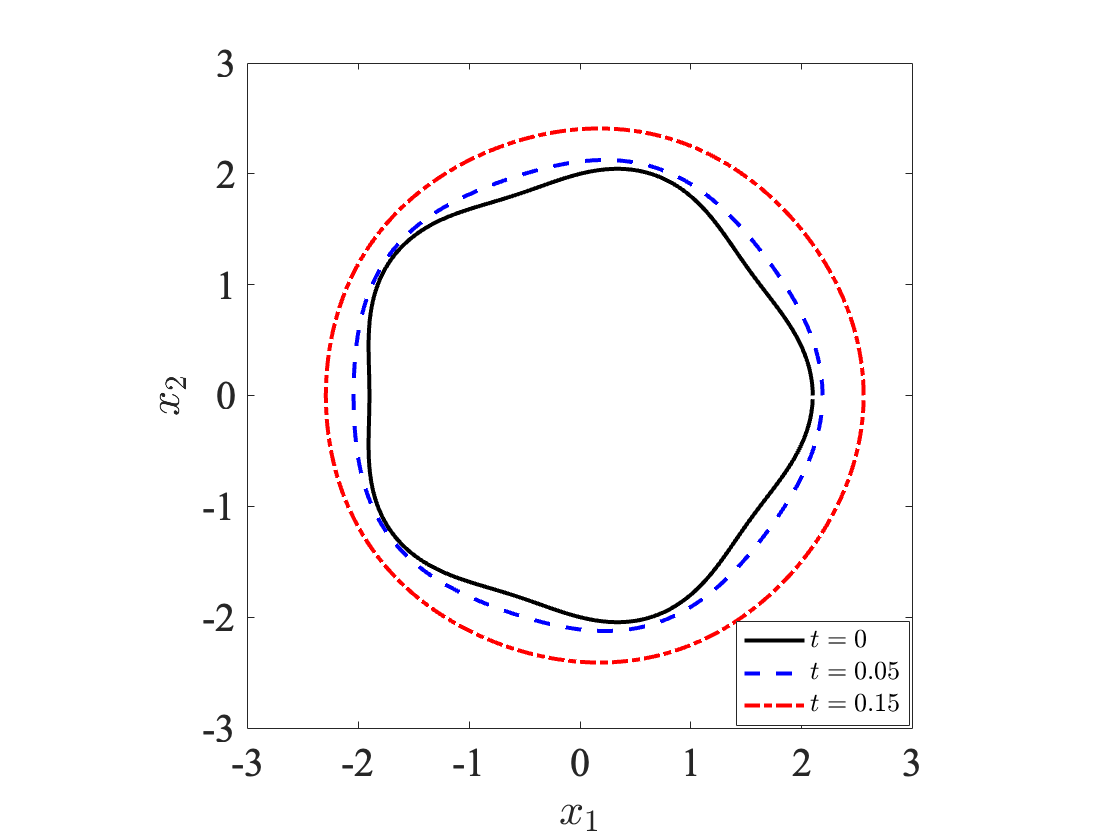}\label{fig:nonradial_dynamics}}
\subfigure[Evolution of the maximum and minimum distances from the boundary points to the center of mass.]{\includegraphics[width=0.45\textwidth]{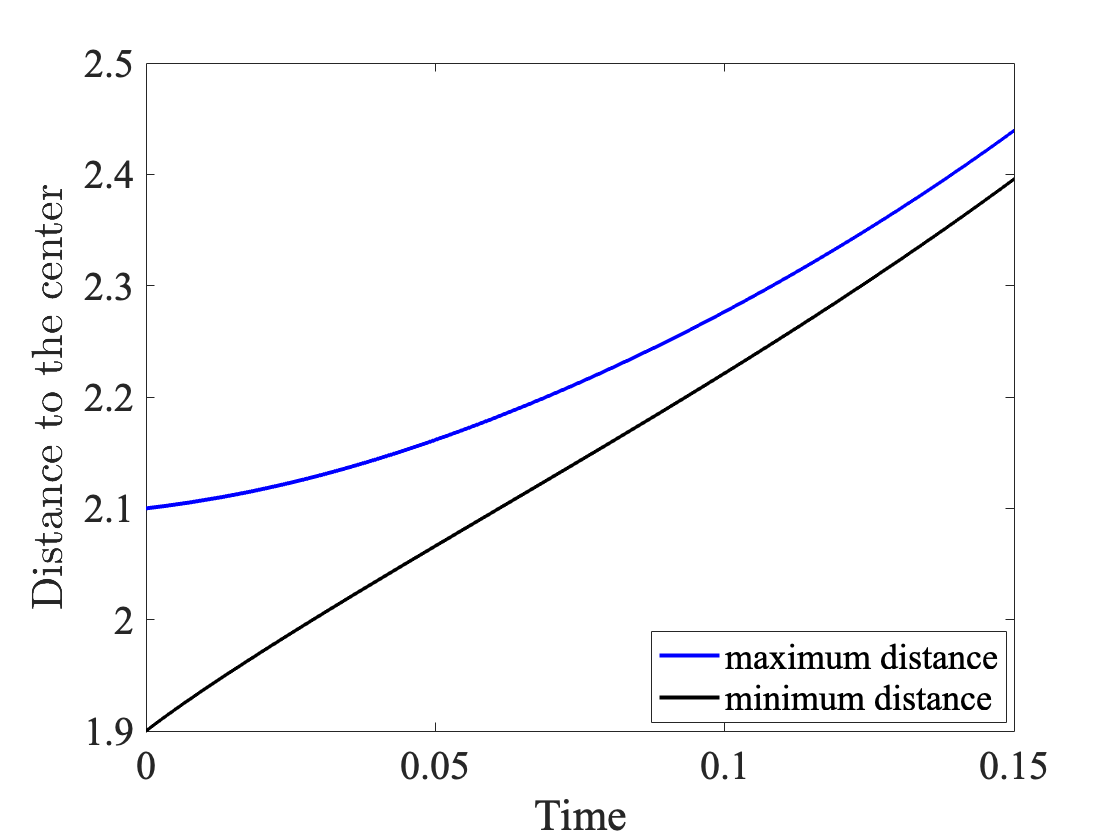}\label{fig:nonradial_radius}}
\caption{The evolution dynamics for a nonradial source term on a perturbed circle with $D_1=0.1$ and $D_2=5$. In (a),  the profiles of $\mathbf x(t)$ at $t=0$, $0.05$, $0.15$ are shown. In (b), The maximum and minimum distances from the boundary points to the center of mass are shown. }
\label{fig:nonradial}
\end{figure}

\section{Conclusion}

In this work, we have presented a meshfree numerical framework for solving Hele-Shaw free boundary problems with surface tension and source terms. By combining geometric local parameterization with boundary integral methods, the governing equations are reformulated as a boundary-only system, eliminating the need for volumetric discretization of the evolving domain. To efficiently treat general source terms, we introduced an eigenfunction-based approximation on a fixed auxiliary domain and established rigorous error estimates for both the basis truncation and coefficient approximation, providing a theoretical foundation for the proposed approach.

The numerical results demonstrate that the proposed method accurately captures the free-boundary evolution while achieving the predicted convergence rates. In particular, the approximation error for the particular solution is observed to decay faster than that of the source function. The proposed framework is further applied to a coupled free boundary tumor growth model, demonstrating its flexibility in handling coupled moving-boundary problems without remeshing.

The proposed approach naturally extends to more general source terms and geometries. In particular, future work includes the construction of Laplacian eigenfunction approximations on arbitrary domains, the extension to three-dimensional free boundary problems, and applications to more complex biological and multiphysics systems.

\section*{Acknowledgments}
ZZ was supported by the ICDS seed grant and National Institute of General Medical Sciences (NIGMS) grant 1R35GM146894.
JH was partially supported by NSF grant DMS-2505605, WH was supported by NIGMS 1R35GM146894 and the Huck Chair in AI Mathematical Modeling from Penn State University's Huck Institutes of the Life Sciences.

\section*{Conflicts of interest / Competing interests
}
The authors declare that they have no competing interests.

\section*{Availability of data and materials}
No datasets were generated or analyzed during the current study. Data sharing is not applicable to this article.

%\bibliographystyle{plain}  
%\bibliography{references}

\begin{thebibliography}{10}

\bibitem{bowman2012introduction}
Frank Bowman.
\newblock {\em Introduction to Bessel functions}.
\newblock Courier Corporation, 2012.

\bibitem{byrne1995growth}
HM~Byrne and M~A\_~J Chaplain.
\newblock Growth of nonnecrotic tumors in the presence and absence of
  inhibitors.
\newblock {\em Mathematical biosciences}, 130(2):151--181, 1995.

\bibitem{chen2003free}
X.~Chen and A.~Friedman.
\newblock A free boundary problem for an elliptic-hyperbolic system: an
  application to tumor growth.
\newblock {\em SIAM J. Math. Anal.}, 35(4):974--986, 2003.

\bibitem{chen2014tumor}
Ying Chen and John~S Lowengrub.
\newblock Tumor growth in complex, evolving microenvironmental geometries: a
  diffuse domain approach.
\newblock {\em Journal of theoretical biology}, 361:14--30, 2014.

\bibitem{courant2024methods}
Richard Courant and David Hilbert.
\newblock {\em Methods of mathematical physics, volume 2}.
\newblock John Wiley \& Sons, 2024.

\bibitem{feng2004analysis}
Xiaobing Feng and Andreas Prohl.
\newblock Analysis of a fully discrete finite element method for the phase
  field model and approximation of its sharp interface limits.
\newblock {\em Mathematics of computation}, 73(246):541--567, 2004.

\bibitem{friedman2012variational}
A.~Friedman and J.~Spruck.
\newblock {\em Variational and free boundary problems}, volume~53.
\newblock Springer Science \& Business Media, 2012.

\bibitem{friedman1963generalized}
Avner Friedman.
\newblock {\em Generalized functions and partial differential equations}.
\newblock Prentice Hall, 1963.

\bibitem{friedman2000free}
Avner Friedman.
\newblock Free boundary problems in science and technology.
\newblock {\em Notices of the AMS}, 47(8):854--861, 2000.

\bibitem{friedman2003hierarchy}
Avner Friedman.
\newblock A hierarchy of cancer models and their mathematical challenges.
\newblock {\em Discrete and Continuous Dynamical Systems-B}, 4(1):147--159,
  2003.

\bibitem{friedman2015free}
Avner Friedman.
\newblock Free boundary problems in biology.
\newblock {\em Philosophical Transactions of the Royal Society A: Mathematical,
  Physical and Engineering Sciences}, 373(2050):20140368, 2015.

\bibitem{friedman2006bifurcation}
Avner Friedman and Bei Hu.
\newblock Bifurcation from stability to instability for a free boundary problem
  arising in a tumor model.
\newblock {\em Archive for rational mechanics and analysis}, 180(2):293--330,
  2006.

\bibitem{greenspan1972models}
Harvey~P Greenspan.
\newblock Models for the growth of a solid tumor by diffusion.
\newblock {\em Studies in Applied Mathematics}, 51(4):317--340, 1972.

\bibitem{greenspan1976growth}
Harvey~P Greenspan.
\newblock On the growth and stability of cell cultures and solid tumors.
\newblock {\em Journal of theoretical biology}, 56(1):229--242, 1976.

\bibitem{HHHS}
W.~Hao, J.~Hauenstein, B.~Hu, and A.~Sommese.
\newblock A three-dimensional steady-state tumor system.
\newblock {\em Appl. Math. Comput.}, 218(6):2661--2669, 2011.

\bibitem{HHLS}
Wenrui Hao, Bei Hu, Shuwang Li, and Lingyu Song.
\newblock Convergence of boundary integral method for a free boundary system.
\newblock {\em J. Comput. Appl. Math.}, 334:128--157, 2018.

\bibitem{harlim2023radial}
John Harlim, Shixiao~Willing Jiang, and John~Wilson Peoples.
\newblock Radial basis approximation of tensor fields on manifolds: from
  operator estimation to manifold learning.
\newblock {\em J. Mach. Learn. Res.}, 24(345):1--85, 2023.

\bibitem{HSH}
H.~Hele-Shaw.
\newblock Flow of water.
\newblock {\em Nature}, 58:520, 1898.

\bibitem{holash1999new}
J~Holash, SJ~Wiegand, and GD~Yancopoulos.
\newblock New model of tumor angiogenesis: dynamic balance between vessel
  regression and growth mediated by angiopoietins and vegf.
\newblock {\em Oncogene}, 18(38):5356--5362, 1999.

\bibitem{hou1994removing}
Thomas~Y Hou, John~S Lowengrub, and Michael~J Shelley.
\newblock Removing the stiffness from interfacial flows with surface tension.
\newblock {\em Journal of Computational Physics}, 114(2):312--338, 1994.

\bibitem{jiang2024generalized}
Shixiao~Willing Jiang, Rongji Li, Qile Yan, and John Harlim.
\newblock Generalized finite difference method on unknown manifolds.
\newblock {\em J. Comput. Phys.}, 502:112812, 2024.

\bibitem{kerbel2008tumor}
Robert~S Kerbel.
\newblock Tumor angiogenesis.
\newblock {\em New England Journal of Medicine}, 358(19):2039--2049, 2008.

\bibitem{kress1989linear}
Rainer Kress.
\newblock {\em Linear integral equations}, volume~82.
\newblock Springer, 1989.

\bibitem{lowengrub2010nonlinear}
John~S Lowengrub, Hermann~B Frieboes, Fang Jin, Yao-Li Chuang, Xiangrong Li,
  Paul Macklin, Steven~M Wise, and Vittorio Cristini.
\newblock Nonlinear modelling of cancer: bridging the gap between cells and
  tumours.
\newblock {\em Nonlinearity}, 23(1):R1--R91, 2010.

\bibitem{lu2023bifurcation}
Min-Jhe Lu, Wenrui Hao, Bei Hu, and Shuwang Li.
\newblock Bifurcation analysis of a free boundary model of vascular tumor
  growth with a necrotic core and chemotaxis.
\newblock {\em Journal of mathematical biology}, 86(1):19, 2023.

\bibitem{macklin2007nonlinear}
Paul Macklin and John Lowengrub.
\newblock Nonlinear simulation of the effect of microenvironment on tumor
  growth.
\newblock {\em Journal of theoretical biology}, 245(4):677--704, 2007.

\bibitem{mirzaei2012generalized}
Davoud Mirzaei, Robert Schaback, and Mehdi Dehghan.
\newblock On generalized moving least squares and diffuse derivatives.
\newblock {\em IMA J. Numer. Anal.}, 32(3):983--1000, 2012.

\bibitem{olver2010nist}
Frank W.~J. Olver, Daniel~W. Lozier, Ronald~F. Boisvert, and Charles~W. Clark,
  editors.
\newblock {\em NIST Handbook of Mathematical Functions}.
\newblock Cambridge University Press, 2010.

\bibitem{pozrikidis1992boundary}
Constantine Pozrikidis.
\newblock {\em Boundary integral and singularity methods for linearized viscous
  flow}.
\newblock Cambridge university press, 1992.

\bibitem{richardson1972hele}
Stanley Richardson.
\newblock Hele shaw flows with a free boundary produced by the injection of
  fluid into a narrow channel.
\newblock {\em Journal of Fluid Mechanics}, 56(4):609--618, 1972.

\bibitem{ryskin1984numerical}
G~Ryskin and LG~Leal.
\newblock Numerical solution of free-boundary problems in fluid mechanics. part
  1. the finite-difference technique.
\newblock {\em Journal of fluid mechanics}, 148:1--17, 1984.

\bibitem{sackinger1996newton}
Phillip~A Sackinger, Peter~Randall Schunk, and Rekha~R Rao.
\newblock A newton--raphson pseudo-solid domain mapping technique for free and
  moving boundary problems: a finite element implementation.
\newblock {\em Journal of computational physics}, 125(1):83--103, 1996.

\bibitem{saffman1958penetration}
Philip~Geoffrey Saffman and Geoffrey~Ingram Taylor.
\newblock The penetration of a fluid into a porous medium or hele-shaw cell
  containing a more viscous liquid.
\newblock {\em Proceedings of the Royal Society of London. Series A.
  Mathematical and Physical Sciences}, 245(1242):312--329, 1958.

\bibitem{shen1995efficient}
Jie Shen.
\newblock Efficient spectral-galerkin method ii. direct solvers of second-and
  fourth-order equations using chebyshev polynomials.
\newblock {\em SIAM Journal on Scientific Computing}, 16(1):74--87, 1995.

\bibitem{tryggvason1983numerical}
Gretar Tryggvason and Hassan Aref.
\newblock Numerical experiments on hele shaw flow with a sharp interface.
\newblock {\em Journal of Fluid Mechanics}, 136:1--30, 1983.

\bibitem{zhang2026geometric}
Zengyan Zhang, Wenrui Hao, and John Harlim.
\newblock Geometric local parameterization for solving hele-shaw problems with
  surface tension.
\newblock {\em SIAM J. Sci. Comput. (to appear)}, 2026.

\end{thebibliography}

\appendix
\section{Generalized Moving Least Squares Approximation for the Local Parametrization}\label{appx:GMLS}
In this appendix, we review the Generalized Moving Least Squares (GMLS) method and its application to approximate a local parameterization of smooth manifolds, which subsequently yields a higher-order approximation of the local tangent space.

We first employ a classical local singular value decomposition (SVD) method based on neighboring point cloud data to approximate the local tangent space at each point on the manifold $\Gamma\subset\mathbb{R}^d$,. More specifically, for each point \(\mathbf x_i\) in the point cloud $X\subset \mathbb{R}^n$, we construct a local distance matrix $\mathbf D_{i}=[\mathbf D_{i_1},\dots,\mathbf D_{i_k}]\in\mathbb {R}^{n\times k}$, where $k>d$ and $\mathbf D_{i_j}:=\mathbf x_{i_j}-\mathbf x_i$, $j=1,\dots,k$, using its \(k\)-nearest neighbors. The approximated tangent space $\widetilde{T_{\mathbf x_i}\Gamma}$ is then obtained by taking the SVD of $\mathbf D_i$, yielding an orthonormal basis $\{\tilde{\mathbf t}_i^{(1)},\dots,\tilde{\mathbf t}_i^{(d)}\}$. We should point out that, for uniformly sampled random data of size $N$, the SVD approximation converges at order $N^{-1/d}$, provided that a sufficiently large number of $k$-nearest points is used (see Remark 9 in \cite{harlim2023radial}). 

To attain a higher-order approximation of the local tangent space, we consider the Generalized Moving Least Square method. Generally, we use an intrinsic polynomial to approximate smooth function $g:\Gamma\to\mathbb{R}$ over the neighborhood of $\mathbf x_i$ which is the optimal solution for the following least-squares problem,
\[\min_{\hat{g}\in\mathbb{P}_{\mathbf x_i}^{l,d}}\sum_{j=1}^k\left(g(\mathbf x_{i_j})-\hat{g}(\mathbf x_{i_j})\right)^2,\]
where $\mathbb{P}_{\mathbf x_i}^{l,d}$ is the space of intrinsic polynomial with degree up to $l$ at the point $\mathbf x_i\in\Gamma\subset\mathbf R^d$. We refer to $\hat{g}$ as the Generalized Moving Least Squares (GMLS) estimate of $g$.

In the tangent space approximation, the function $g$ will be the local parameterization of the manifold, with the tangent space represented by its Jacobian. In this work, we restrict our attention to the case $d=1$ and $n=2$. Let $\tilde{\mathbf t}_i$ and $\tilde{\mathbf n}_i$ denote the approximate tangent and normal vectors obtained from the local SVD method. The GMLS approximation for the local parametrization over a neighborhood of each base point $\mathbf x_i$ is stated as follows,

\begin{enumerate}
    \item Obtain the intrinsic polynomial $\tilde p_i:\widetilde{T_{\mathbf x_i}\Gamma}\rightarrow\mathbb{R}$ through a least-square fitting procedure,
\[\tilde p_i(\tilde s)=\tilde\alpha_{i,1} \tilde s+\tilde\alpha_{i,2} \tilde s^2+\cdots+\tilde\alpha_{i,\ell} \tilde s^\ell,\]
where $\tilde s\in \{\tilde{\mathbf t}_i\cdot(\mathbf x_{i_1}-\mathbf x_{i_1}),\dots,\tilde{\mathbf t}_i\cdot(\mathbf x_{i_k}-\mathbf x_{i_1})\}\subset \widetilde{T_{\mathbf x_i}\Gamma}$, and the coefficients are obtained by a least squares fit, i.e., 
\[(\tilde\alpha_{i,1},\dots,\tilde\alpha_{i,\ell})=\arg\min \sum_{j=1}^k\Big(\tilde p_i(\tilde s)-\tilde{\mathbf n}_{i}\cdot(\mathbf x_{i_j}-\mathbf x_{i_1})\Big)^2.\]

This defines a local coordinate chart for the manifold near $\mathbf x_i$ using the embedding map $\tilde\iota_i:\widetilde{T_{\mathbf x_i}\Gamma}\rightarrow\mathbb{R}^2$,
\begin{equation}\label{svd-map}
\tilde\iota_i(\tilde s)=\mathbf x_i+\tilde{\mathbf t}_i \tilde s+\tilde{\mathbf n}_i \tilde p_i(\tilde s).
\end{equation}

    \item Refine the local tangent vectors approximation using the Jacobian of $\tilde{\iota}_i$ defined in \eqref{svd-map}. The GMLS approximation of the tangent vectors over the neighborhood of $\mathbf x_i$ is
    \begin{equation}\label{gmls_tangent}
\mathbf{\hat{t}}_i(\tilde s)=\tilde{\mathbf t}_i+\tilde{\mathbf n}_i\tilde p'_i(\tilde s),
\end{equation}
with the estimated tangent vector at $\mathbf x_i$ denoted by $\mathbf{\hat{t}}_i=\mathbf{\hat{t}}_i(0)$, as a GMLS approximation to the underlying tangent vector $\mathbf t(\mathbf x_i)$. The corresponding unit normal vector $\hat{\mathbf n}_i$ is constructed such that $\hat{\mathbf t}_i\cdot\hat{\mathbf n}_i=0$ with $\hat{\mathbf n}_i$ oriented outward from $\Omega(t)$. The GMLS fitting procedure is subsequently repeated using the updated local coordinates $(\hat{\mathbf t}_i,\hat{\mathbf n}_i)$, yielding the refined local embedding map $\hat{\iota}_i:T_{\mathbf x_i}\Gamma\rightarrow \mathbb{R}^2$,
\begin{equation}\label{appx:gmls-map}
\hat{\iota}_i(s)=\mathbf x_i+\mathbf{\hat{t}}_is+\mathbf{\hat{n}}_i p_i(s),
\end{equation}
where $p_i(s)=\alpha_{i,1}s+\alpha_{i,2} s^2+\cdots+\alpha_{i,\ell} s^\ell$ and $\alpha_{i,1},\alpha_{i,2},\cdots,\alpha_{i,\ell}$ are obtained via GMLS fitting using the data pairs from $s\in T(i)=\{\mathbf{\hat{t}}_i\cdot(\mathbf x_{i_1}-\mathbf x_{i_1}),\dots,\mathbf{\hat{t}}_i\cdot(\mathbf x_{i_k}-\mathbf x_{i_1})\}\subset T_{\mathbf x_i}\Gamma$ and $N(i) =\{\mathbf{\hat{n}}_i\cdot(\mathbf x_{i_1}-\mathbf x_{i_1}),\dots,\mathbf{\hat{n}}_i\cdot(\mathbf x_{i_k}-\mathbf x_{i_1})\}$.
\end{enumerate}
\begin{remark} (Iterative refinement). 
   The additional normal correction term in \eqref{appx:gmls-map} yields a refinement of the estimation for the local parameterization by applying the GMLS procedure iteratively on the local coordinates $(\hat{\mathbf t}_i, \hat{\mathbf n}_i)$. We iterate this process multiple times, updating tangent vector estimate with the Jacobian of $\hat{\iota}_i(0)$, and stop when the polynomial linear coefficient $p'_i(0)$ falls below a desired tolerance as a stopping criterion, as further iterations provide negligible improvement. In our numerical simulations, we take this stopping criterion to be  $p'_i(0)=\alpha_{i,1} \approx 10^{-12}$, and denote the resulting estimated local tangent and normal vectors as $(\mathbf{\hat{t}}_i,\mathbf{\hat{n}}_i)$ for notational simplicity. As a result, $p_i(s)$ effectively has no linear term, i.e., $p_i(s)=\alpha_{i,2} s^2+\cdots+\alpha_{i,\ell} s^\ell$.
\end{remark}

\begin{remark} (Order of convergence).
    The GMLS tangent vector estimate $\hat{\mathbf t}_i$ converges at rate $(N^{-1}\log N)^{\ell}$, which indicates an improved estimate for $\ell>1$ compared to the SVD approximation, $\tilde{\mathbf t}_i$, and the GMLS approximation of the curvature converges at rate $(N^{-1}\log N)^{\ell-1}$ for uniformly sampled random data of size $N$. For further details of the convergence results, we refer to the previous work \cite{zhang2026geometric}.
\end{remark}

\end{document}